\theoremstyle{plain}
\newtheorem{theorem}{Theorem}[section]
\newtheorem{lemma}[theorem]{Lemma}
\newtheorem{proposition}[theorem]{Proposition}
\newtheorem{corollary}[theorem]{Corollary}
\theoremstyle{definition}
\newtheorem{remark}[theorem]{Remark}
\newcommand\bG{{\mathbb G}}
\newcommand\bP{{\mathbb P}}
\newcommand\bZ{{\mathbb Z}}
\newcommand\cO{{\mathcal O}}
\newcommand\aff{{\rm aff}}
\newcommand\ant{{\rm ant}}
\renewcommand\char{{\rm char}}
\newcommand\et{\rm{\acute{e}t}}
\newcommand\id{{\rm id}}
\newcommand\red{{\rm red}}
\newcommand\Aut{{\rm Aut}}
\newcommand\Coker{{\rm Coker}}
\newcommand\Ext{{\rm Ext}}
\newcommand\Gal{{\rm Gal}}
\newcommand\GL{{\rm GL}}
\newcommand\Hilb{{\rm Hilb}}
\newcommand\Hom{{\rm Hom}}
\newcommand\Ker{{\rm Ker}}
\newcommand\SL{{\rm SL}}
\newcommand\Spec{{\rm Spec}}
\newcommand\Sym{{\rm Sym}}
\newcommand\Tr{{\rm Tr}}
\title{On extensions of algebraic groups with finite quotient}
\author{Michel Brion}
\date{}
\begin{document}

\maketitle
 
\begin{abstract}
We obtain a lifting property for finite quotients of algebraic groups, 
and applications to the structure of these groups. 
\end{abstract}

\section{Introduction}
\label{sec:int}

Consider an extension of algebraic groups, that is, an exact sequence 
of group schemes of finite type over a field,
\begin{equation}\label{eqn:ext} \CD
1 @>>> N @>>> G @>{f}>> Q @>>> 1.
\endCD \end{equation}
Such an extension is generally not split, i.e., $f$ admits 
no section which is a morphism of group schemes.  
In this note, we obtain the existence of a splitting in a weaker 
sense, for extensions with finite quotient group:

\begin{theorem}\label{thm:main}
Let $G$ be an algebraic group over a field $k$, and $N$ a normal
subgroup of $G$ such that $G/N$ is finite. Then there exists 
a finite subgroup $F$ of $G$ such that $G = N \cdot F$. 
\end{theorem}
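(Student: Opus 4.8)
\emph{Reductions.} The plan is to peel off characteristic subgroups of the identity component $G^{0}$ one at a time. Since $G/N$ is finite, $G^{0}$ maps to a finite group, hence $G^{0}\subseteq N$, and it suffices to produce a finite subgroup scheme $F$ with $G = G^{0}\cdot F$ --- equivalently, one mapping onto the component group $Q := \pi_0(G) = G/G^{0}$. The d\'evissage step will be this: if $H\subseteq G^{0}$ is connected and normal in $G$, and the assertion is known both for $G/H$ (whose identity component is $G^{0}/H$) and for every algebraic group whose identity component is $H$, then it holds for $G$; indeed, choose a finite $\overline F\subseteq G/H$ mapping onto $\pi_0(G/H)=Q$, let $\widetilde F\subseteq G$ be its preimage, so that $\widetilde F^{0}=H$, and let $F\subseteq\widetilde F$ be finite with $\widetilde F = H\cdot F$; then $F$ maps onto $Q$. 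Iterating, I am reduced to treating $G$ when $G^{0}$ is of a few special types.

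\emph{The pushout tool.} The main construction is as follows. Suppose $G^{0}=A$ is commutative and, for every $n\geq 1$, multiplication by $n$ on $A$ is an isogeny with finite kernel $A[n]$; this holds when $A$ is a torus, an abelian variety, or an anti-affine group. Conjugation makes $Q$ act on $A$, multiplication by $n$ is $Q$-equivariant, and pushing out $1\to A\to G\to Q\to 1$ along it yields $1\to A\to G/A[n]\to Q\to 1$, whose class is $n$ times the original class in the group of extensions of $Q$ by $A$ inducing the given action. That group is torsion: over $\bar k$ it is the group cohomology $H^{2}(Q,A(\bar k))$, annihilated by $|Q|$, and over $k$ a restriction--corestriction argument along a finite extension splitting $Q$ annihilates it by a multiple of $|Q|$. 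Hence for suitable $n$ the pushed-out extension splits, $G/A[n]\cong A\rtimes Q$, and the preimage in $G$ of a copy of $Q$ is an extension of $Q$ by the finite group $A[n]$, so it is finite and maps onto $Q$. This settles the case of commutative $A$ of the listed types.

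\emph{Carrying out the d\'evissage.} Assume first that $k$ is perfect. By Chevalley's theorem there is an exact sequence $1\to L\to G^{0}\to B\to 1$ with $L$ connected affine and $B$ an abelian variety; since $L$ is characteristic in $G^{0}$ it is normal in $G$, the pushout tool applies to $G/L$ (identity component $B$), and so I may assume $G$ affine. Then $U:=R_u(G^{0})$ is characteristic in $G^{0}$ and $G^{0}/U$ is reductive, so the d\'evissage step reduces me to two cases. If $G^{0}$ is reductive, fix a maximal torus $T\subseteq G^{0}$ over $k$: then $G = G^{0}\cdot N_G(T)$, because $G$ acts transitively on the variety of maximal tori of $G^{0}$ (already $G^{0}$ does), and $N_G(T)^{0}=T$ is a torus, because $N_{G^{0}}(T)/T$ and $N_G(T)/N_{G^{0}}(T)$ are finite; applying the pushout tool to $N_G(T)$ gives a finite $F$ with $N_G(T)=T\cdot F$, hence $G=G^{0}\cdot F$. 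If $G^{0}$ is unipotent, then in characteristic $0$ the extension $1\to G^{0}\to G\to Q\to 1$ splits --- $G^{0}$ is an iterated extension of vector groups and the cohomology of the finite group $Q$ with coefficients in a $\bQ$-vector space vanishes --- so $F\cong Q$ works; in characteristic $p$, $G(\bar k)$ is a torsion group (because $G^{0}$ is unipotent), so lifts to $G(\bar k)$ of the elements of $Q(\bar k)$ generate a finite subgroup by Schur's theorem that a finitely generated torsion subgroup of $\GL_n$ is finite, and the subgroup generated by all of its Galois conjugates --- still finitely generated and torsion, hence finite --- descends to the desired $F$ over $k$.

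\emph{Main obstacle.} The crux is thus the affine case, and I expect the real difficulty to lie in positive characteristic. The unipotent case already needs the non-cohomological argument above; more importantly, over an imperfect field neither Chevalley's theorem nor the unipotent radical is available over $k$, so the whole d\'evissage must be rebuilt on the structure theory of algebraic groups over imperfect fields --- the anti-affine radical and pseudo-reductive groups --- and that is where I expect most of the work to concentrate. A secondary task is to make the pushout tool precise over an arbitrary base field, i.e. to identify the relevant group of extensions and verify that it is torsion.
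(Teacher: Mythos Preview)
Your opening reduction has a real gap: ``$G^{0}$ maps to a finite group, hence $G^{0}\subseteq N$'' is false in positive characteristic, since a finite group scheme can have nontrivial infinitesimal identity component (take $G=\alpha_p\times\bZ/p$, $N=\bZ/p$). The paper's first move (Lemma~\ref{lem:smooth}) is precisely to repair this: pass to $G/G_n$ for the $n$th Frobenius kernel $G_n$ to make $G$ smooth, after which $G/N$ is \'etale and your reduction is valid; one then pulls the finite subgroup back through $G_n$. You never address this, so as written your argument only treats smooth $G$.

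Beyond that, your d\'evissage genuinely differs from the paper's, and the difference is what resolves the imperfect-field obstacle you flag. You run Chevalley first, then the unipotent radical, and invoke $N_G(T)$ only in the residual reductive case. The paper inverts this: after the smoothness reduction it goes \emph{immediately} to $N_G(T)$ (Lemma~\ref{lem:tor}), which reduces to $G^{0}$ nilpotent over any field; then the derived series gives $G^{0}$ commutative; only then is $G^{0}$ decomposed into toric, unipotent, and abelian pieces, using Totaro's structure theorem for pseudo-abelian varieties in place of Chevalley. Every step is field-agnostic --- maximal tori exist, $Z_{G^0}(T)$ is nilpotent, Totaro works over arbitrary $k$ --- so there is no separate imperfect-field argument and no pseudo-reductive theory. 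Your torsion claim for the extension group is also made precise (Lemma~\ref{lem:commutative}) via the exact sequence $0\to HH^2(Q,N)\to\Ext^1(Q,N)\to H^1_{\et}(Q,N)$ rather than a restriction--corestriction in the base field, which is not obviously available.

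Your treatment of the unipotent case in characteristic $p$ via Schur's theorem on finitely generated periodic linear groups is a genuine alternative to the paper's method, which reduces further to vector groups and builds $F$ explicitly from a Hochschild $2$-cocycle. Once $G$ is smooth and affine with $G^{0}$ unipotent (as it is at that stage of either d\'evissage), your argument works over any field and is shorter.
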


Here $N \cdot F$ denotes, as in \cite[VIA.5.3.3]{SGA3},
the quotient of the semi-direct product $N \rtimes F$ by $N \cap F$ 
embedded as a normal subgroup via $x \mapsto (x,x^{-1})$. 
If $G/N$ is \'etale and $k$ is perfect, then the subgroup $F$ may be 
chosen \'etale as well. But this fails over any imperfect field $k$, 
see Remark \ref{rem:etale} for details.

In the case where $G$ is smooth and $k$ is perfect, 
Theorem \ref{thm:main} was known to Borel and Serre, and they 
presented a proof over an algebraically closed field of characteristic
$0$ (see \cite[Lem.~5.11 and footnote on p.~152]{Borel-Serre}).
That result was also obtained by Platonov for smooth linear algebraic
groups over perfect fields (see \cite[Lem.~4.14]{Platonov}).
In the latter setting, an effective version of Theorem \ref{thm:main} 
has been obtained recently by Lucchini Arteche; see 
\cite[Thm.~1.1]{Lucchini-II}, and \cite[Prop.~1.1]{Lucchini}), 
\cite[p.~473]{CGR}, \cite[Lem.~5.3]{LMMR} for earlier results 
in this direction.

Returning to an extension (\ref{eqn:ext}) with an arbitrary quotient
$Q$, one may ask whether there exists a subgroup $H$ of $G$ such 
that $G = N \cdot H$ and $N \cap H$ is finite (when $Q$ is finite, 
the latter condition is equivalent to the finiteness of $H$). 
We then say that (\ref{eqn:ext}) is \emph{quasi-split}, and 
$H$ is a \emph{quasi-complement} of $N$ in $G$, 
with \emph{defect group} $N \cap H$. 

When $Q$ is smooth and $N$ is an abelian variety, every extension 
(\ref{eqn:ext}) is quasi-split (as shown by Rosenlicht; see 
\cite[Thm.~14]{Rosenlicht}, and \cite[Sec.~2]{Milne} for a modern 
proof). The same holds when $Q$ is reductive (i.e., $Q$ is smooth 
and affine, and the radical of $Q_{\bar{k}}$ is a torus), 
$N$ is arbitrary and $\char(k) = 0$, as 
we will show in Corollary \ref{cor:red}. On the other hand, the group 
$G$ of unipotent $3 \times 3$ matrices sits in a central extension 
$1 \to \bG_a \to G \to \bG_a^2 \to 1$, 
which is not quasi-split. It would be interesting to determine which 
classes of groups $N$, $Q$ yield quasi-split extensions. 
Another natural problem is to bound the defect group in terms of $N$ 
and $Q$. The proof of Theorem \ref{thm:main} yields some information 
in that direction; see Remark \ref{rem:def}, and \cite{Lucchini-II} 
for an alternative approach via nonabelian Galois cohomology. 

This article is organized as follows. In Section \ref{sec:reductions},
we begin the proof of Theorem \ref{thm:main} with a succession of 
reductions to the case where $Q = G/N$ is \'etale and $N$ is 
a smooth connected unipotent group, a torus, or an abelian variety. 
In Section \ref{sec:commutative}, we show that every class of extensions 
(\ref{eqn:ext}) is torsion in that setting (Lemma 
\ref{lem:commutative}); this quickly implies Theorem 
\ref{thm:main}. Section \ref{sec:applications} presents 
some applications of Theorem \ref{thm:main} to the structure of 
algebraic groups: we obtain analogues of classical results of Chevalley, 
Rosenlicht and Arima on smooth connected algebraic groups
(see \cite{Rosenlicht, Arima, Rosenlicht-II}) and of Mostow
on linear algebraic groups in characteristic $0$ (see \cite{Mostow}). 
Finally, we show that every homogeneous space under an algebraic group 
admits a projective equivariant compactification; this result seems to 
have been unrecorded so far. It is well-known that any such homogeneous 
space is quasi-projective (see \cite[Cor.~VI.2.6]{Raynaud}); also, 
the existence of equivariant compactifications of certain homogeneous 
spaces having no separable point at infinity has attracted recent interest 
(see e.g. \cite{Gabber, GGM}).

\section{Proof of Theorem \ref{thm:main}: some reductions}
\label{sec:reductions}

We first fix notation and conventions, which will be used 
throughout this article. We consider schemes and their morphisms
over a field $k$, and choose an algebraic closure $\bar{k}$. 
Given a scheme $X$ and an extension $K/k$ of fields, we denote by 
$X_K$ the $K$-scheme obtained from $X$ by the base change 
$\Spec(K) \to \Spec(k)$.

We use mostly \cite{SGA3}, and occasionally \cite{DG}, as references 
for group schemes. Given such a group scheme $G$, we denote by 
$e_G \in G(k)$ the neutral element, and by $G^0$ the neutral component 
of $G$, with quotient map $\pi : G \to G/G^0 = \pi_0(G)$. 
The group law of $G$ is denoted by 
$\mu : G \times G \to G$, $(x,y) \mapsto xy$. 

Throughout this section, we consider an extension (\ref{eqn:ext})
and a subgroup $F$ of $G$. Then the map 
\[ \nu : N \rtimes F \longrightarrow G, \quad (x,y) \longmapsto xy \] 
is a morphism of group schemes with kernel
$N \cap F$, embedded in $N \rtimes F$ via $x \mapsto (x,x^{-1})$.
Thus, $\nu$ factors through a morphism of group schemes
\[ \iota : N \cdot F \longrightarrow G. \]
Also, the composition $F \to G \to G/N$ factors through a morphism
of group schemes 
\[ i : F/(N \cap F) \longrightarrow G/N. \] 
By \cite[VIA.5.4]{SGA3}, $\iota$ and $i$ are closed immersions
of group schemes.

\begin{lemma}\label{lem:equiv}
The following conditions are equivalent:

\smallskip

\noindent
{\rm (i)} $\iota$ is an isomorphism.

\smallskip

\noindent
{\rm (ii)} $i$ is an isomorphism.

\smallskip

\noindent
{\rm (iii)} $\nu$ is faithfully flat.

\smallskip

\noindent
{\rm (iv)} For any scheme $S$ and any $g \in G(S)$, there exists
a faithfully flat morphism of finite presentation $f: S' \to S$
and $x \in N(S')$, $y \in F(S')$ such that $g = x y$ in $G(S')$.

\smallskip

When $G/N$ is smooth, these conditions are equivalent to:

\smallskip

\noindent
{\rm (v)} $G(\bar{k}) = N(\bar{k}) F(\bar{k})$.

\end{lemma}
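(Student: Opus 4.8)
The plan is to establish the equivalences by proving a cycle through (i)--(iv), then handling (v) separately under the smoothness hypothesis. First I would observe that $\iota : N \cdot F \to G$ and $i : F/(N \cap F) \to G/N$ are already known to be closed immersions by \cite[VIA.5.4]{SGA3}, so in each case ``isomorphism'' is equivalent to ``faithfully flat'' is equivalent to ``surjective (as a morphism of fppf sheaves)''. The heart of the matter is thus to compare the images of $\iota$ and $i$, and to recognize surjectivity in the functorial terms of (iv).

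For (i)~$\Leftrightarrow$~(ii): there is a commutative diagram with exact rows
\[ \CD
1 @>>> N @>>> N \cdot F @>>> F/(N \cap F) @>>> 1 \\
@| @. @VV{\iota}V @VV{i}V \\
1 @>>> N @>>> G @>>> G/N @>>> 1,
\endCD \]
where the top row comes from the description of $N \cdot F$ in \cite[VIA.5.3.3]{SGA3} (note $N$ is normalized by $F$ inside $N \cdot F$, and the quotient $(N \cdot F)/N$ is identified with $F/(N \cap F)$). Since the left vertical map is the identity, the snake lemma (or a direct diagram chase on $S$-points for variable $S$) shows $\iota$ is an isomorphism if and only if $i$ is. For (i)~$\Leftrightarrow$~(iii): $\nu = \iota \circ q$, where $q : N \rtimes F \to N \cdot F$ is the quotient by $N \cap F$, hence faithfully flat; so $\nu$ is faithfully flat if and only if $\iota$ is faithfully flat, which since $\iota$ is a closed immersion of group schemes of finite type is equivalent to $\iota$ being an isomorphism (a faithfully flat closed immersion is an isomorphism). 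For (iii)~$\Leftrightarrow$~(iv): $\nu$ faithfully flat of finite presentation means exactly that $\nu$ is an epimorphism of fppf sheaves, which unwinds to the statement that every $g \in G(S)$ lifts, after an fppf cover $S' \to S$, to a point of $(N \rtimes F)(S') = N(S') \times F(S')$ mapping to $g$, i.e. to a factorization $g = xy$; one direction uses that fppf-locally-surjective plus the finite presentation of $\nu$ gives faithful flatness by the fibre-wise flatness criterion, the other is immediate.

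Finally, assume $G/N$ is smooth; I would show (ii)~$\Leftrightarrow$~(v). Since $i$ is a closed immersion, (ii) holds iff $i$ is surjective. If (ii) holds then evaluating at $\bar k$ gives $F(\bar k)/(N \cap F)(\bar k) \to (G/N)(\bar k)$ surjective, and combined with $G(\bar k) \to (G/N)(\bar k)$ being surjective (as $N$ is... here one notes $G \to G/N$ is faithfully flat, and $\bar k$-points lift because $N_{\bar k}$ has a $\bar k$-point in every fibre --- the fibres are torsors under $N_{\bar k}$, which is nonempty) one deduces $G(\bar k) = N(\bar k) F(\bar k)$ by a direct chase. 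Conversely, if (v) holds, then the closed immersion $i$ is surjective on $\bar k$-points; because $G/N$ is smooth, hence reduced, and $F/(N\cap F)$ is of finite type, a closed subscheme of $(G/N)_{\bar k}$ containing all $\bar k$-points must be all of $(G/N)_{\bar k}$, so $i_{\bar k}$ is an isomorphism, and therefore $i$ is an isomorphism by faithfully flat descent along $\Spec \bar k \to \Spec k$. The main obstacle I anticipate is the careful bookkeeping in the equivalence (iii)~$\Leftrightarrow$~(iv)---making precise the passage between ``faithfully flat of finite presentation'' and the fppf-local lifting statement, and in particular invoking the right flatness criterion so that fppf-local surjectivity of a finite-presentation morphism of group schemes upgrades to faithful flatness; the smooth case (v) is comparatively routine once one has the reducedness of $G/N$ in hand.
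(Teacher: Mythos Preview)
Your plan is sound and closely parallels the paper's argument; the treatment of (i)$\Leftrightarrow$(ii) and of (v) is essentially the same (the paper packages (i)$\Leftrightarrow$(ii) by noting that the square with vertical $N$-torsor maps is \emph{cartesian}, which is a slicker version of your diagram chase).

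The one genuine soft spot is (iv)$\Rightarrow$(iii): the ``fibre-wise flatness criterion'' is not the relevant tool --- it does not, for a general finite-presentation morphism of schemes, upgrade fppf-local surjectivity to flatness. What makes the implication work here is the group structure, via the factorization you already recorded: $\nu=\iota\circ q$ with $q$ faithfully flat and $\iota$ a closed immersion. Condition (iv) says $\nu$ is an epimorphism of fppf sheaves, hence so is $\iota$; but a closed immersion that is an fppf epimorphism is an isomorphism (the identity of $G$ lifts fppf-locally to $N\cdot F$, uniquely since $\iota$ is a monomorphism, so the lift descends to a section of $\iota$). This yields (i), and then (iii). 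The paper closes the cycle by a different device, proving (iv)$\Rightarrow$(i) directly: apply (iv) to $\id_G\in G(G)$ to obtain a faithfully flat $S'\to G$ factoring through $\iota$; then $\cO_G$ injects into the pushforward of $\cO_{S'}$, hence into $\iota_*\cO_{N\cdot F}$, forcing the closed immersion $\iota$ to be an isomorphism.
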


\begin{proof}
Recall that $i$ factors through an isomorphism 
$F/(N \cap F) \to (N \cdot F)/N$ (see \cite[VIA.5.5.3]{SGA3}). 
Thus, we obtain a commutative diagram
\[ \CD 
N \cdot F @>{\iota}>> G \\
@V{\varphi}VV @V{f}VV \\
F/(N \cap F) @>{i}>> G/N, \\
\endCD \]
where both vertical arrows are $N$-torsors for the action of $N$ 
by right multiplication. As a consequence, this diagram is cartesian.
In particular, $i$ is an isomorphism if and only if so is $\iota$;
this yields the equivalence (i) $\Leftrightarrow$ (ii).

(i) $\Rightarrow$ (iii): Since $\nu$ is identified with the
quotient map of $N \rtimes F$ by $N \cap F$, the assertion follows
from \cite[VIA.3.2]{SGA3}. 

(iii) $\Rightarrow$ (iv): This follows by forming the cartesian square
\[ \CD
S' @>>> S \\
@VVV @V{g}VV \\
N \rtimes F @>{\nu}>> G \\
\endCD \]
and observing that $\nu$ is of finite presentation, since 
the schemes $G$, $N$ and $F$ are of finite type. 

(iv) $\Rightarrow$ (i): By our assumption applied to the identity 
map $G \to G$, there exists a scheme 
$S'$ and morphisms $x : S' \to N$, $y : S' \to F$ such that the 
morphism $\nu \circ (x \times y) : N \rtimes F \to G$ is faithfully 
flat of finite presentation. As a consequence, the morphism of 
structure sheaves $\cO_G \to \nu_* (x \times y)_* (\cO_{S'})$ is 
injective. Thus, so are $\cO_G \to \nu_*(\cO_{N \rtimes F})$, and hence  
$\cO_G \to i_* (\cO_{N \cdot F})$. Since $i$ is a closed immersion, 
it must be an isomorphism. 

When $G/N$ is smooth, $i$ is an isomorphism 
if and only if it is surjective on $\bar{k}$-rational points. Since 
$(G/N)(\bar{k}) = G(\bar{k})/N(\bar{k})$ and likewise for 
$F/(N\cap F)$, this yields the equivalence (ii) $\Leftrightarrow$ (v).
\end{proof}

We assume from now on that the quotient group $Q$ in the
extension (\ref{eqn:ext}) is \emph{finite}.

\begin{lemma}\label{lem:smooth}
If the exact sequence $1 \to H^0 \to H \to \pi_0(H) \to 1$
is quasi-split for any smooth algebraic group $H$ such that
$\dim(H) = \dim(G)$, then (\ref{eqn:ext}) is quasi-split as well.
\end{lemma}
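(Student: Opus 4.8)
The plan is to reduce the extension (\ref{eqn:ext}) to the case of a smooth group of the same dimension by passing to reduced subschemes after a suitable base change, and then transport back a quasi-complement. First I would pass to the perfect closure $k^{1/p^\infty}$ (or directly to $\bar k$) so that the reduced subscheme $(G_{\bar k})_\red$ is a smooth subgroup scheme $H$ of $G_{\bar k}$; this is where perfection of the base field is used. One checks that $H$ still surjects onto $Q_{\bar k}$ with finite kernel: indeed $G_{\bar k}/H$ is finite, being a quotient of both the finite group $Q_{\bar k}$ and of $G_{\bar k}/(G_{\bar k})^0_\red$, which is finite since $(G_{\bar k})^0$ differs from $H^0 = (G_{\bar k})^0_\red$ only by a nilpotent thickening. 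Hence $\dim H = \dim G$, and the hypothesis applies to $H$: the sequence $1 \to H^0 \to H \to \pi_0(H) \to 1$ is quasi-split.

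The second step is to produce a finite subgroup of $G_{\bar k}$ mapping onto $Q_{\bar k}$ with finite defect. From a quasi-complement $H'$ of $H^0$ in $H$ (a finite subgroup with $H = H^0 \cdot H'$) one gets $H = H^0 \cdot H'$; composing with the inclusion $H \hookrightarrow G_{\bar k}$ and with $G_{\bar k} \to Q_{\bar k}$, the image of $H'$ in $Q_{\bar k}$ has finite index (it contains the image of $\pi_0(H)$, which surjects onto $Q_{\bar k}$ up to finite index because $H$ surjects onto $Q_{\bar k}$). Enlarging $H'$ by finitely many points — for instance by adjoining coset representatives for the (finite) cokernel — I obtain a finite subgroup $F' \subseteq G_{\bar k}$ with $G_{\bar k} = N_{\bar k} \cdot F'$, i.e. a quasi-complement of $N_{\bar k}$ in $G_{\bar k}$; this uses the equivalence (i)$\Leftrightarrow$(v) of Lemma \ref{lem:equiv}, available since $Q$ is finite hence smooth is not automatic — but $Q$ finite means $G_{\bar k} \to Q_{\bar k}$ is finite, and after base change one may use the rational-point criterion on the étale part and a separate small argument on the infinitesimal part.

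The final and most delicate step is descent: a quasi-complement defined over $\bar k$ must be pushed down to one over $k$. A finite subgroup scheme of $G_{\bar k}$ is automatically defined over a finite subextension $k'/k$, and then over a finite Galois closure; the issue is to make the choice Galois-invariant. Here I would argue that the \emph{set} of quasi-complements with a given (minimal) defect group is finite — since all such are finite subgroups of bounded order — and invoke a standard averaging or "smallest orbit" trick: the Galois group permutes this finite set, and by replacing $F'$ with the subgroup generated by a Galois orbit (which only enlarges the defect by a controlled finite amount) one gets a Galois-stable, hence $k$-rational, quasi-complement $F$ of $N$ in $G$. This Galois-descent bookkeeping — controlling the defect group while achieving invariance, and checking that "subgroup generated by a finite set of finite subgroups" stays finite in the non-smooth setting — is the main obstacle; the rest is formal manipulation of the diagrams already set up before Lemma \ref{lem:equiv}.
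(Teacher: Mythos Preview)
Your approach has a genuine gap in the descent step, and the paper takes a completely different route that avoids it.

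The descent argument does not go through. First, the base change you need is to a \emph{perfect} field so that $(G_K)_{\red}$ is a subgroup; but the perfect closure $k^{1/p^\infty}$ is purely inseparable over $k$, so there is no Galois group to average over. If instead you go to $\bar k$, Galois theory only controls the separable part $k_s \subset \bar k$, and $(G_{k_s})_{\red}$ need not be a subgroup. Second, even granting a quasi-complement $F' \subset G_{\bar k}$, your claim that the set of quasi-complements with fixed defect is finite is false: already for $G = \bG_a \times \bZ/p\bZ$ in characteristic $p$, the complements to $\bG_a$ are parametrized by $\Hom(\bZ/p\bZ,\bG_a)$, which is infinite. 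Third, the subgroup generated by the Galois orbit of a finite subgroup need not be finite in general, as you yourself flag; without this, the ``averaging'' produces nothing. Finally, the hypothesis of the lemma is about smooth algebraic groups over $k$, not over $\bar k$, so even the first application of the hypothesis to $(G_{\bar k})_{\red}$ is not licensed as stated.

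The paper's argument stays entirely over $k$. In characteristic $p>0$ it uses the $n$th relative Frobenius kernel $G_n \subset G$: for $n \gg 0$ the quotient $G/G_n$ is smooth (SGA3, VIIA.8.3), of the same dimension as $G$, and is a $k$-group. One applies the hypothesis (via the already-treated smooth case) to $G/G_n$ to get a finite subgroup $F' \subset G/G_n$ with $G/G_n = (N/N_n)\cdot F'$, and then \emph{pulls back}: the preimage $F$ of $F'$ in $G$ is finite (an extension of $F'$ by the infinitesimal $G_n$), and $G = N \cdot F$ follows from Lemma~\ref{lem:equiv}~(iv). No base-field extension and no descent are involved.
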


\begin{proof}
Consider first the case where $G$ is smooth. Then $Q$ is \'etale,
and hence $N$ contains $G^0$. By our assumption, there exists a finite 
subgroup $F \subset G$ such that $G = G^0 \cdot F$. In view of Lemma
\ref{lem:equiv} (iv), it follows that $G = N \cdot F$.

If $\char(k)=0$, then the proof is completed as every algebraic 
group is smooth (see e.g. \cite[VIB.1.6.1]{SGA3}). 
So we may assume that $\char(k) = p > 0$. 
Consider the $n$-fold relative Frobenius morphism 
\[ F^n_G : G \longrightarrow G^{(p^n)} \] 
and its kernel $G_n$. Then $F^n_G$ is finite and bijective, so that 
$G_n$ is an infinitesimal normal subgroup of $G$. Moreover, the quotient 
$G/G_n$ is smooth for $n \gg 0$ (see \cite[VIIA.8.3]{SGA3}). We may
thus choose $n$ so that $G/G_n$ and $N/N_n$ are smooth. The 
composition $N \to G \to G/G_n$ factors through a closed immersion
of group schemes $N/N_n \to G/G_n$ by \cite[VIA.5.4]{SGA3} again. 
Moreover, the image of $N/N_n$ is a normal subgroup of $G/G_n$,
as follows e.g. from our smoothness assumption and 
\cite[VIB.7.3]{SGA3}. This yields an exact sequence
\[
1 \longrightarrow N/N_n \longrightarrow G/G_n \longrightarrow Q'
\longrightarrow 1,
\] 
where $Q'$ is a quotient of $Q$ and hence is finite; moreover,
$\dim(G/G_n) = \dim(G)$. By our assumption and the first 
step, there exists a finite subgroup $F'$ of $G/G_n$ such that
$G/G_n = (N/N_n) \cdot F'$. In view of \cite[VIA.5.3.1]{SGA3},
there exists a unique subgroup $F$ of $G$ containing $G_n$ such that 
$F/G_n = F'$; then $F$ is finite as well.

We check that $G = N \cdot F$ by using Lemma \ref{lem:equiv} (iv)
again. Let $S$ be a scheme, and $g \in G(S)$. Then there exists a
faithfully flat morphism of finite presentation $S' \to S$ and
$x' \in (N/N_n)(S')$, $y' \in F'(S')$ such that$F^n_G(g) = x' y'$ in
$(G/G_n)(S')$. Moreover, there exists a faithfully flat morphism 
of finite presentation $S'' \to S'$ and $x'' \in N(S'')$, 
$y'' \in F(S'')$ such that $F^n_G(x'') = x'$ and $F^n_G(y'') = y'$. 
Then $y''^{-1} x''^{-1} g \in G_n(S'')$, and hence 
$g \in N(S'') F(S'')$, since $F$ contains $G_n$.  
\end{proof}

\begin{remark}\label{rem:smooth}
With the notation of the proof of Lemma \ref{lem:smooth}, 
there is an exact sequence of quasi-complements 
\[ 
1 \longrightarrow G_n \longrightarrow F \longrightarrow F' 
\longrightarrow 1.
\] 
When $N = G^0$, so that $G_n \subset N$, we also have an exact 
sequence of defect groups
\[
1 \longrightarrow G_n \longrightarrow N \cap F 
\longrightarrow (N/G_n) \cap F' \longrightarrow 1. 
\]
\end{remark}

By Lemma \ref{lem:smooth}, it suffices to prove Theorem 
\ref{thm:main} when $G$ \emph{is smooth and} $N = G^0$, 
so that $Q = \pi_0(G)$. We may thus choose a maximal torus $T$ 
of $G$, see \cite[XIV.1.1]{SGA3}. Then the normalizer $N_G(T)$ 
and the centralizer $Z_G(T)$ are (represented by) subgroups of $G$, 
see \cite[VIB.6.2.5]{SGA3}). Moreover, $N_G(T)$ is smooth by 
\cite[XI.2.4]{SGA3}. We now gather further properties of $N_G(T)$:

\begin{lemma}\label{lem:tor}
{\rm (i)} $G = G^0 \cdot N_G(T)$.

\smallskip

\noindent
{\rm (ii)} $N_G(T)^0= Z_{G^0}(T)$.

\smallskip

\noindent
{\rm (iii)} We have an exact sequence
$1 \to W(G^0,T) \to \pi_0(N_G(T)) \to \pi_0(G) \to 1$,
where $W(G^0,T) := N_{G^0}(T)/Z_{G^0}(T) = \pi_0(N_{G^0}(T))$
denotes the Weyl group.
\end{lemma}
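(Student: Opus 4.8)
The plan is to prove the three assertions of Lemma \ref{lem:tor} in order, relying on standard structure theory of maximal tori from \cite{SGA3}.

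For (i), the key input is that all maximal tori of $G^0_{\bar{k}}$ are conjugate under $G^0(\bar{k})$ (see \cite[XIV.3.20]{SGA3} or the relevant statement). Given any $g \in G(\bar{k})$, the torus $gTg^{-1}$ is again a maximal torus of $G^0_{\bar{k}}$ (since $G^0$ is normal in $G$, conjugation by $g$ preserves it, and it carries maximal tori to maximal tori). Hence there is $h \in G^0(\bar{k})$ with $hgTg^{-1}h^{-1} = T$, i.e. $hg \in N_G(T)(\bar{k})$, so $g \in G^0(\bar{k}) N_G(T)(\bar{k})$. Since $G$ is smooth and $G/G^0$ is étale, I would conclude $G = G^0 \cdot N_G(T)$ via Lemma \ref{lem:equiv} (the equivalence (i) $\Leftrightarrow$ (v), applied with $N = G^0$, $F = N_G(T)$ — though note $N_G(T)$ need not be finite, so strictly one should either first pass to a suitable finite subgroup or check that the relevant equivalence still applies; the cleanest route is to observe that $G^0 \cdot N_G(T)$ is a subgroup of $G$ containing $G^0$ and surjecting onto $\pi_0(G)$ on $\bar k$-points, and both are smooth, whence equality). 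Alternatively one cites \cite[XI.5.2 or XII.7.1]{SGA3} directly.

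For (ii), I would argue that $N_G(T)^0 = N_{G^0}(T)^0$ since $N_G(T)^0$ is connected and contained in $G^0$, and that $N_{G^0}(T)^0 = Z_{G^0}(T)^0$ because the identity component of the normalizer of a torus in a connected (smooth) group coincides with the identity component of its centralizer — this is the classical fact that $N_H(T)/Z_H(T)$ is finite for a maximal torus $T$ in a connected group $H$ (so the two normalizer/centralizer groups share a neutral component), combined with the fact that $Z_{G^0}(T)$ is already connected when $T$ is a maximal torus (again a standard result, \cite[XI.5.3]{SGA3} or via the rigidity of tori, e.g. \cite[XII.6.6]{SGA3}). Putting these together gives $N_G(T)^0 = Z_{G^0}(T)$.

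For (iii), assertion (ii) identifies the neutral component of $N_G(T)$, so $\pi_0(N_G(T)) = N_G(T)/Z_{G^0}(T)$. I would map this onto $\pi_0(G) = G/G^0$ via the restriction of $\pi$; part (i) shows this map is surjective (on the level of group schemes, using that $N_G(T) \to G/G^0$ is faithfully flat, which follows from (i) and Lemma \ref{lem:equiv}), and its kernel is $(N_G(T) \cap G^0)/Z_{G^0}(T) = N_{G^0}(T)/Z_{G^0}(T) = W(G^0,T)$, the last equality being the definition of the Weyl group together with (ii) giving $Z_{G^0}(T) = Z_{N_{G^0}(T)}(T)$ as the neutral component. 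The main obstacle is bookkeeping at the scheme-theoretic level rather than the $\bar k$-point level: one must be careful that the conjugacy-of-maximal-tori argument, which is naturally stated over $\bar k$, really does yield faithful flatness of $G^0 \cdot N_G(T) \hookrightarrow G$ as group schemes and hence the exactness of the sequence in (iii); invoking the smoothness of $G$ and $N_G(T)$ and the finiteness (étaleness) of all the component groups involved is what makes this routine rather than delicate.
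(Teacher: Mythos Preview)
Your proposal is correct and follows essentially the same approach as the paper: conjugacy of maximal tori over $\bar{k}$ for (i), the identification of the Cartan subgroup with its connected normalizer via \cite[XII.6.6]{SGA3} for (ii), and combining these for (iii). Your hesitation about applying Lemma \ref{lem:equiv} (v) with $F = N_G(T)$ is unnecessary: that lemma never assumes $F$ is finite (only that $G/N$ is smooth for part (v)), and the paper invokes it directly.
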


\begin{proof}
(i) By Lemma \ref{lem:equiv} (v), it suffices to show that 
$G(\bar{k}) = G^0(\bar{k}) N_G(T)(\bar{k})$. Let $x \in G(\bar{k})$,
then $x T x^{-1}$ is a maximal torus of $G^0(\bar{k})$, and hence
$x T x^{-1} = y T y^{-1}$ for some $y \in G^0(\bar{k})$.
Thus, $x \in y N_G(T)(\bar{k})$, which yields the assertion.

(ii) We may assume that $k$ is algebraically closed and $G$ is
connected (since $N_G(T)^0 = N_{G^0}(T)^0$). Then $Z_G(T)$ is
a Cartan subgroup of $G$, and hence equals its connected
normalizer by \cite[XII.6.6]{SGA3}.

(iii) By (i), the natural map $N_G(T)/N_{G^0}(T) \to \pi_0(G)$
is an isomorphism. Combined with (ii), this yields the 
statement.
\end{proof}

\begin{remark}\label{rem:tor}
If $N_G(T) = N_G(T)^0 \cdot F$ for some subgroup $F \subset N_G(T)$,
then $G = G^0 \cdot F$ by Lemmas \ref{lem:equiv} and \ref{lem:tor}.
Moreover, the commutative diagram of exact sequences
\[ \CD 
1 \longrightarrow & N_G(T)^0 \cap F & \longrightarrow & F & 
\longrightarrow & \pi_0(N_G(T)) & \longrightarrow 1 \\
& @VVV @V{\id}VV @VVV & & \\
1 \longrightarrow & G^0 \cap F & \longrightarrow & F & 
\longrightarrow & \pi_0(G) & \longrightarrow 1 \\
\endCD \]
together with Lemma \ref{lem:tor} yields the exact sequence
\[
1 \longrightarrow Z_{G^0}(T) \cap F \longrightarrow G^0 \cap F 
\longrightarrow W(G^0,T) \longrightarrow 1.
\] 
\end{remark}

In view of Lemma \ref{lem:equiv} (iv) and Lemma \ref{lem:tor} (i),
it suffices to prove Theorem \ref{thm:main} under the additional
assumption that $T$ \emph{is normal in} $G$. Then $T$ is central in 
$G^0$, and hence $G^0_{\bar{k}}$ is nilpotent by \cite[XII.6.7]{SGA3}. 
It follows that $G^0$ \emph{is nilpotent}, in view of 
\cite[VIB.8.3]{SGA3}. To obtain further reductions, we will use 
the following:

\begin{lemma}\label{lem:ind}
Let $N'$ be a normal subgroup of $G$ contained in $N$. 
Assume that the resulting exact sequence 
$1 \to N/N' \to G/N' \to Q \to 1$ 
is quasi-split, and that any exact sequence of algebraic groups 
$1 \to N' \to G' \to Q' \to 1$, 
where $Q'$ is finite, is quasi-split as well. Then (\ref{eqn:ext}) 
is quasi-split.
\end{lemma}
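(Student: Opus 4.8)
The plan is to build a quasi-complement of $N$ in $G$ by first quasi-splitting the quotient extension $1 \to N/N' \to G/N' \to Q \to 1$, then pulling the resulting quasi-complement back to $G$ and quasi-splitting the extension of $N'$ by that pullback. First I would apply the first hypothesis: there is a subgroup $\bar H \subset G/N'$ with $(G/N') = (N/N') \cdot \bar H$ and $(N/N') \cap \bar H$ finite. Let $p : G \to G/N'$ be the quotient map and set $H' := p^{-1}(\bar H)$, a subgroup of $G$ fitting into an exact sequence $1 \to N' \to H' \to \bar H \to 1$. I want to say that the image $Q''$ of $H'$ in $Q$ is finite, so that this sequence has finite quotient after replacing $\bar H$ by a suitable finite-index piece — but in fact the cleaner route is to observe that $\bar H \cap (N/N')$ is finite, and $\bar H / (\bar H \cap (N/N')) \cong (G/N')/(N/N') = Q$ is finite by Lemma \ref{lem:equiv}(ii) applied in $G/N'$; hence $\bar H$ is a \emph{finite} group, and therefore so is its image under any homomorphism. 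In particular the extension $1 \to N' \to H' \to \bar H \to 1$ has finite quotient $\bar H$.

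Now I invoke the second hypothesis with $G' = H'$ and $Q' = \bar H$: there is a subgroup $H \subset H'$ with $H' = N' \cdot H$ and $N' \cap H$ finite. I claim $H$ is the desired quasi-complement of $N$ in $G$. The two things to check are $G = N \cdot H$ and $N \cap H$ finite. For the first, it is enough by Lemma \ref{lem:equiv}(iv) to produce, fppf-locally, a factorization of an arbitrary $g \in G(S)$ as a product of a section of $N$ and a section of $H$; chasing through $p$, one writes $p(g) = \bar x \bar h$ fppf-locally with $\bar x \in (N/N')$, $\bar h \in \bar H$, lifts $\bar h$ to $H'(S') = (N' \cdot H)(S')$ hence fppf-locally to a product $x'' h$ with $x'' \in N'$, $h \in H$, and lifts $\bar x$ to $N$; the leftover factor lies in $N'(S'') \subset N(S'')$, so $g \in N(S'') \cdot H(S'')$ after one further fppf base change, composing these. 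For the second, $N \cap H \subset N \cap H' $; since $H' = p^{-1}(\bar H)$ we have $N \cap H' = p^{-1}(\bar H \cap (N/N')) \cap N$, an extension of the finite group $\bar H \cap (N/N')$ by $N'$, and intersecting further with $H$ lands inside the finite group $N' \cap H$ in the kernel direction while mapping into the finite group $\bar H \cap (N/N')$; so $N \cap H$ is squeezed between a finite subgroup and a finite quotient and is therefore finite. (Alternatively: $N \cap H$ maps to $(N/N') \cap \bar H$ with kernel contained in $N' \cap H$, both finite.)

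The main obstacle I anticipate is the bookkeeping in the fppf descent argument for $G = N \cdot H$: one has to be careful that the successive faithfully flat base changes $S'' \to S' \to S$ compose correctly and that the "leftover" element genuinely lands in $N'$ rather than merely in $N$ — this is where normality of $N'$ in $G$ (so that the $N'$-coset structure is respected by the group law) is used. A secondary point worth stating carefully is that $\bar H$ is finite: this is not assumed but deduced, and it is exactly what makes the second hypothesis applicable. Everything else — that preimages under $p$ of subgroups are subgroups, that $p^{-1}(\bar H)$ sits in the displayed exact sequence, that closed immersions of group schemes compose — is standard and can be cited from \cite[VIA]{SGA3} as in the preceding lemmas.
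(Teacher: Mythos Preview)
Your proposal is correct and follows essentially the same route as the paper: pull back the finite quasi-complement $\bar H = F'$ in $G/N'$ to $H' = G' = p^{-1}(\bar H)$, apply the second hypothesis to the extension $1 \to N' \to G' \to \bar H \to 1$ to obtain a finite $H = F$, and verify $G = N \cdot H$ by the fppf criterion of Lemma~\ref{lem:equiv}(iv). The only organizational differences are that the paper takes the finiteness of $\bar H$ as immediate (it was noted in the introduction that for finite $Q$ a quasi-complement is automatically finite), and that the paper arranges the fppf chase as $G = N \cdot G'$ first (using $N' \subset G'$ to absorb the leftover $N'$-factor) and then $N \cdot G' = N \cdot (N' \cdot F) = N \cdot F$, which spares your separate verification that $N \cap H$ is finite, since $H = F$ is itself finite.
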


\begin{proof}
By assumption, there exists a finite subgroup $F'$ of $G/N'$
such that $G/N' = (N/N') \cdot F'$. Denote by $G'$ the subgroup 
of $G$ containing $N'$ such that $G'/N' = F'$. By assumption 
again, there is a finite subgroup $F$ of $G'$ containing $N'$ 
such that $G' = N' \cdot F$. We check that $G = N \cdot F$ by using
Lemma \ref{lem:equiv} (iv). Let $S$ be a scheme, and $g \in G(S)$; 
denote by $f' : G \to G/N'$ the quotient map. Then there exists a 
faithfully flat morphism of finite presentation $S' \to S$ and 
$x \in (N/N')(S')$, $y \in F'(S')$ such that $f'(g) = x y$
in $(G/N')(S')$. Moreover, there exists a 
faithfully flat morphism of finite presentation $S'' \to S'$
and $z \in N(S'')$, $w \in G'(S'')$ such that $f'(z) = x$ 
and $f'(w) = y$. Then $w^{-1} z^{-1} g \in N'(S'')$, and hence
$g \in N(S'') G'(S'')$, as $G'$ contains $N'$. This shows that
$G = N \cdot G' = N \cdot (N' \cdot F)$. We conclude by
observing that $N \cdot (N' \cdot F) = N \cdot F$, in view of
Lemma \ref{lem:equiv} (iv) again.
\end{proof}

\begin{remark}\label{rem:ind}
With the notation of the proof of Lemma \ref{lem:ind}, we have
an exact sequence 
$1 \to N' \to G' = N' \cdot F \to F' \to 1$,
and hence an exact sequence of quasi-complements
\[
1 \longrightarrow N'\cap F \longrightarrow F 
\longrightarrow F' \longrightarrow 1.
\]
Moreover, we obtain an exact sequence
$1 \to N' \to N \cap G' \to (N/N') \cap F' \to 1$,
by using \cite[VIA.5.3.1]{SGA3}. Since 
$N \cap G' = N \cap (N' \cdot F) = N' \cdot (N \cap F)$,
where the latter equality follows from Lemma \ref{lem:equiv} (iv),
this yields an exact sequence of defect groups
\[
1 \longrightarrow N' \cap F \longrightarrow N \cap F 
\longrightarrow (N/N') \cap F' \longrightarrow 1.
\]
\end{remark}

Next, we show that it suffices to prove Theorem \ref{thm:main}
when $G^0$ is assumed in addition to be \emph{commutative}.
 
We argue by induction on the dimension of $G$ (assumed to be
smooth, with $G^0$ nilpotent). If $\dim(G) = 1$, then $G^0$ is 
either a $k$-form of $\bG_a$ or $\bG_m$, or an elliptic curve;
in particular, $G^0$ is commutative. In higher dimensions,  
the derived subgroup $D(G^0)$ is a smooth, connected normal 
subgroup of $G$ contained in $G^0$, and the quotient $G^0/D(G^0)$ 
is commutative of positive dimension (see \cite[VIB.7.8, 8.3]{SGA3}). 
Moreover, $G/D(G^0)$ is smooth, and $\pi_0(G/D(G^0)) = \pi_0(G)$. 
By the induction assumption, it follows that the exact sequence 
$1 \to G^0/D(G^0) \to G/D(G^0) \to \pi_0(G) \to 1$ is quasi-split.
Also, every exact sequence $1 \to D(G^0) \to G' \to Q' \to 1$,
where $Q'$ is finite, is quasi-split, by the induction assumption 
again together with Lemma \ref{lem:smooth}. Thus, Lemma 
\ref{lem:ind} yields the desired reduction.
 
We now show that we may further assume $G^0$ to be
\emph{a torus, a smooth connected commutative unipotent group, 
or an abelian variety}.

Indeed, we have an exact sequence of commutative algebraic groups
\[
1 \longrightarrow T \longrightarrow G^0 \longrightarrow H
\longrightarrow 1,  
\]
where $T$ is the maximal torus of $G^0$, and $H$ is smooth and
connected. Moreover, we have an exact sequence
\[
1 \longrightarrow H_1 \longrightarrow H \longrightarrow H_2
\longrightarrow 1,  
\]
where $H_1$ is a smooth connected affine algebraic group, and $H_2$ 
is a pseudo-abelian variety in the sense of \cite{Totaro}, i.e.,
$H_2$ has no nontrivial smooth connected affine normal subgroup. 
Since $H_1$ contains no nontrivial torus, it is unipotent;
also, $H_2$ is an extension of a smooth connected unipotent
group by an abelian variety $A$, in view of \cite[Thm.~2.1]{Totaro}.
Note that $T$ is a normal subgroup of $G$ (the largest subtorus). 
Also, $H_1$ is a normal subgroup of $G/T$ (the largest smooth 
connected affine normal subgroup of the neutral component), 
and $A$ is a normal subgroup of $(G/T)/H_1$ as well (the largest 
abelian subvariety). 
Thus, arguing by induction on the dimension as in the preceding step, 
with $D(G^0)$ replaced successively by $T$, $H_1$ and $A$, yields 
our reduction. 

When $G^0$ is unipotent and $\char(k) = p > 0$, we may further 
assume that $G^0$ \emph{is killed by} $p$. Indeed, by 
\cite[XVII.3.9]{SGA3}, there exists a composition series
$\{ e_G \} = G_0 \subset G_1 \subset \cdots \subset G_n = G^0$
such that each $G_i$ is normal in $G$, and each quotient
$G_i/G_{i-1}$ is a $k$-form of some $(\bG_a)^{r_i}$; in 
particular, $G_i/G_{i-1}$ is killed by $p$. Our final reduction 
follows by induction on $n$.

\section{Proof of Theorem \ref{thm:main}: 
extensions by commutative groups}
\label{sec:commutative}

In this section, we consider smooth algebraic groups $Q$, $N$ 
such that $Q$ is finite and $N$ is commutative.
Given an extension (\ref{eqn:ext}), 
the action of $G$ on $N$ by conjugation factors through an action 
of $Q$ by group automorphisms that we denote by $(x,y) \mapsto y^x$,
where $x \in Q$ and $y \in N$. 
Recall that the isomorphism classes 
of such extensions with a prescribed $Q$-action on $N$
form a commutative (abstract) group, that we denote by 
$\Ext^1(Q,N)$; see \cite[XVII.App.~I]{SGA3} (and 
\cite[III.6.1]{DG} for the setting of extensions of group sheaves).

\begin{lemma}\label{lem:commutative}
With the above notation and assumptions, the group
$\Ext^1(Q,N)$ is torsion. 
\end{lemma}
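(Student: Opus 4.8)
The plan is to reduce to the cases identified in Section~\ref{sec:reductions}, namely $N$ a torus, a smooth connected commutative unipotent group killed by $p$ (in characteristic $p$), or an abelian variety, and to bound the torsion by $|Q|$ (or a small multiple thereof) in each case. The universal tool is the exact sequence relating $\Ext^1(Q,N)$ to group cohomology of the finite group scheme $Q$ with coefficients in $N$: one has a natural map $\Ext^1(Q,N) \to \rH^2(Q,N)$ (symmetric $2$-cocycles, or in the scheme-theoretic sense of \cite[XVII.App.~I]{SGA3}), whose kernel consists of extensions with an (abstract, not necessarily homomorphic) section and whose cokernel is controlled by $\Hom(Q,N)$-type terms. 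Since $|Q|$ kills $\rH^i(Q,-)$ for $i \geq 1$ by a transfer/restriction–corestriction argument (valid for finite group schemes as well, using that the constant $|Q|$ factors through the trace for the trivial subgroup), it suffices to kill the part of $\Ext^1$ not seen by $\rH^2$, i.e.\ the extensions admitting a scheme-theoretic section. So the first key step is: an extension (\ref{eqn:ext}) with $Q$ finite admitting a section (as schemes) has class killed by a power of $\char(k)$ when $N$ is unipotent, and is already trivial when $N$ is a torus or abelian variety because in those cases every such extension is a pushout of the ``universal'' one and the obstruction lives in a group killed by an integer prime to nothing problematic. More concretely, for $N$ a torus or abelian variety and $Q$ \'etale, $\Ext^1(Q,N) = \rH^2(Q_{\et},N)$ is already finite torsion killed by $|Q|$ by the transfer argument, since $\rH^1(Q,N)$-obstructions to sections vanish by smoothness of $N$ (so every such extension has a section as varieties after \'etale localization, hence is a cocycle).

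Thus the genuinely new case is $N = \bG_a$ (or a $k$-form of a vector group killed by $p$) with $Q$ \'etale, in characteristic $p>0$. Here I expect $\Ext^1(Q,N)$ need not be killed by $|Q|$ on the nose, but to still be torsion, and in fact killed by $p$ times $|Q|$, for the following reason. Write $|Q| = p^a m$ with $p \nmid m$. Restriction–corestriction with respect to a $p$-Sylow-type subgroup reduces to $Q$ a finite \'etale $p$-group; and the extension class, after multiplying by $|Q|/p^a$, lies in the image of $\rH^2(Q_{(p)}, N)$ where $Q_{(p)}$ is a $p$-subgroup. For a finite \'etale $p$-group acting on a $k$-form of a vector group, $\rH^2(Q,N)$ is killed by $p$: the relevant cohomology is that of the abstract group $Q(\kb)$ with coefficients in the $Q(\kb)$-module $N(\kb)$, twisted by Galois descent, and a $p$-group's cohomology with $\bF_p$-vector-space coefficients is killed by $p$ — wait, that is false for $i \geq 2$ in general, but it \emph{is} killed by $|Q|$, and for $N$ killed by $p$ the group $\rH^2(Q,N)$ is an $\bF_p$-vector space, so killed by $|Q|$ means killed by $p$ once we are on the $p$-primary part, since the whole group is $p$-torsion. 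This is the cleanest line: $N$ killed by $p$ forces $\Ext^1(Q,N)$ to be an $\bF_p$-vector space (pushforward along $[p]=0$ on $N$ annihilates it), and it is simultaneously killed by $|Q|$ via transfer; hence it is killed by $\gcd(p,|Q|)$, which is $1$ if $p \nmid |Q|$ and $p$ otherwise — in all cases the group is torsion, indeed of exponent dividing $p$.

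The main obstacle I anticipate is making the transfer (restriction–corestriction) argument rigorous for $\Ext^1(Q,N)$ itself rather than for a group-cohomology group, since $\Ext^1(Q,N)$ is defined via extensions of group schemes and $Q$ may be non-reduced \emph{after base change issues} — but we have already reduced to $Q$ \'etale, so $Q$ corresponds to a finite group $\Gamma = Q(\kb)$ with Galois action, and $\Ext^1(Q,N)$ sits in a Hochschild–Serre-type exact sequence with terms $\rH^i(\Gal(\kb/k), \Ext^j_{\kb}(\Gamma, N_{\kb}))$, each killed by $|\Gamma|$ for $j=1,2$ and $i+j=2$. The bookkeeping of which terms survive, and checking the corestriction map composes correctly with the edge maps of this spectral sequence, is the fiddly part; but conceptually nothing beyond the standard fact ``$|\Gamma|$ kills $\rH^{\geq 1}$'' is needed, applied on each page. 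A secondary subtlety is that for $N$ an abelian variety one should not literally write $\Ext^1(Q,N) = \rH^2(Q,N(\kb))$, since $N(\kb)$ is divisible and this would force vanishing — instead one uses that sections exist as $\kb$-varieties (smoothness of $N$) so the class is a $2$-cocycle valued in $N(\kb)$, and then transfer applies; the divisibility of $N(\kb)$ actually makes $\rH^2$ vanish outright after killing by $|Q|$, giving the stronger statement that these extensions are split, consistent with Rosenlicht's theorem cited in the introduction. I would present the torus and abelian variety cases briefly, then spend the bulk of the proof on $N$ a form of a vector group, where the $\bF_p$-vector-space structure plus transfer gives the result.
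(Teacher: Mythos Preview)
Your proposal is not a proof but a discussion plan, and it contains a fundamental confusion about the relevant exact sequence. You write that there is a map $\Ext^1(Q,N) \to \rH^2(Q,N)$ whose kernel consists of extensions admitting a scheme-theoretic section. This is backwards. An extension admitting a section \emph{is} described by a Hochschild $2$-cocycle; such extensions form the subgroup $HH^2(Q,N)$ sitting \emph{inside} $\Ext^1(Q,N)$, not a kernel of a map out of it. The correct sequence (which the paper uses, citing \cite[XVII.App.~I.3.1]{SGA3}) is
\[
0 \longrightarrow HH^2(Q,N) \longrightarrow \Ext^1(Q,N) \stackrel{\tau}{\longrightarrow} H^1_{\et}(Q,N),
\]
where $\tau$ sends an extension to its underlying $N$-torsor over $Q$. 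The kernel of $\tau$ is exactly the classes of extensions with a section, and those are the Hochschild classes. Your transfer argument correctly kills $HH^2(Q,N)$ by $\vert Q\vert$, but you have misplaced this term.

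Consequently, what you call ``the part of $\Ext^1$ not seen by $\rH^2$'' is not the extensions with a section but the image in $H^1_{\et}(Q,N)$, and your subsequent claims about this part are wrong. You assert that for $N$ a torus or abelian variety the section obstruction ``vanishes by smoothness of $N$'' and such extensions are ``already trivial''; neither is true. Smoothness of $N$ only ensures that fppf and \'etale $H^1$ agree; the group $H^1_{\et}(Q,N)$ is in general nonzero (e.g.\ Weil--Ch\^atelet groups for abelian varieties, or $H^1$ of non-split tori). What is true, and what the paper invokes, is that $H^1_{\et}(Q,N)$ is \emph{torsion}: since $Q$ is finite \'etale it is a disjoint union of spectra of finite separable extensions $k_i/k$, so $H^1_{\et}(Q,N)$ is a direct sum of Galois $H^1$'s, each torsion (the paper cites Rosenlicht). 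This single observation, together with the transfer bound on $HH^2$, finishes the proof uniformly for all smooth commutative $N$ at once. Your case-by-case analysis on $N$, the Sylow reduction, the $\bF_p$-vector-space argument, and the Hochschild--Serre bookkeeping are all unnecessary.
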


\begin{proof}
Any extension (\ref{eqn:ext}) yields an $N$-torsor over $Q$
for the \'etale topology, since $Q$ is finite and \'etale.
This defines a map $\tau : \Ext^1(Q,N) \to H^1_{\et}(Q,N)$,
which is a group homomorphism (indeed, the sum of any two 
extensions is obtained by taking their direct product, 
pulling back under the diagonal map $Q \to Q \times Q$, and
pushing forward under the multiplication $N \times N \to N$,
and the sum of any two torsors is obtained by the analogous
operations). The kernel of $\tau$ consists of those classes 
of extensions that admit a section (which is a morphism of 
schemes). In view of \cite[XVII.App.~I.3.1]{SGA3}, 
this yields an exact sequence
\[ 0 \longrightarrow HH^2(Q,N) \longrightarrow \Ext^1(Q,N)
\stackrel{\tau}{\longrightarrow} H^1_{\et}(Q,N), \]
where $HH^i$ stands for Hochschild cohomology (denoted by $H^i$
in \cite{SGA3}, and by $H^i_0$ in \cite{DG}). Moreover, the group 
$H^1_{\et}(Q,N)$ is torsion (as follows e.g. from 
\cite[Thm.~14]{Rosenlicht}), and $HH^2(Q,N)$ is killed by the 
order of $Q$, as a special case of \cite[XVII.5.2.4]{SGA3}.
\end{proof}

\begin{remark}\label{rem:tors}
The above argument yields that $\Ext^1(Q,N)$ is killed by $md$ if
$Q$ is finite \'etale of order $m$, and $N$ is a torus split by an
extension of $k$ of degree $d$. Indeed, we just saw that 
$HH^2(Q,N)$ is killed by $m$; also, $H^1_{\et}(Q,N)$ is a direct sum 
of groups of the form $H^1_{\et}(\Spec(k'),N)$ for finite separable 
extensions $k'$ of $k$, and these groups are killed by $d$. 
This yields a slight generalization of \cite[Prop.~1.1]{Lucchini}, 
via a different approach.   
\end{remark}

\medskip

\noindent
{\sc End of the proof of Theorem \ref{thm:main}}.

Recall from our reductions in Section \ref{sec:reductions} 
that we may assume $G^0$ to be a smooth commutative unipotent group, 
a torus, or an abelian variety. We will rather denote $G^0$ by $N$, 
and $\pi_0(G)$ by $Q$.

We first assume in addition that $\char(k) = 0$ if $N$ is unipotent.
Then the $n$th power map 
\[ n_N: N \longrightarrow N, \quad x \longmapsto x^n \] 
is an isogeny for any positive integer $n$. Consider an extension 
(\ref{eqn:ext}) and denote by $\gamma$ its class in $\Ext^1(Q,N)$. 
By Lemma \ref{lem:commutative}, we may choose $n$ so that 
$n \gamma = 0$. Also, $n \gamma = (n_N)_*(\gamma)$ (the pushout 
of $\gamma$ by $n_N$); moreover, the exact sequence of commutative 
algebraic groups
\[ \CD 
1 @>>> N[n] @>>> N @>{n_N}>> N @>>> 1 
\endCD \]
yields an exact sequence 
\[ \CD 
\Ext^1(Q,N[n]) @>>> \Ext^1(Q,N) @>{(n_N)_*}>> \Ext^1(Q,N)
\endCD \]
in view of \cite[XVII.App.~I.2.1]{SGA3}. Thus, there exists a class 
$\gamma' \in \Ext^1(Q,N[n])$ with pushout $\gamma$, i.e., we have 
a commutative diagram of extensions
\[ \CD
1 \longrightarrow & N[n] & \longrightarrow & G' &
\longrightarrow & Q & \longrightarrow 1 \\
& @VVV    @VVV   @V{\id}VV & & \\  
1 \longrightarrow & N  & \longrightarrow  & G & \longrightarrow & Q &
\longrightarrow 1, \\
\endCD \]
where the square on the left is cartesian. It follows that $G'$ 
is a finite subgroup of $G$, and $G = N \cdot G'$.

Next, we consider the remaining case, where $N$ is unipotent and
$\char(k) = p > 0$. In view of our final reduction at the end of 
Section 2, we may further assume that $N$ is killed by $p$. 
Then there exists an \'etale isogeny $N \to N_1$, where $N_1$ 
is a vector group (see \cite[Lem.~B.1.10]{CGP}). This yields 
another commutative diagram of extensions
\[ \CD
1 \longrightarrow & N & \longrightarrow & G &
\longrightarrow & Q & \longrightarrow 1 \\
& @VVV    @VVV   @V{\id}VV & & \\  
1 \longrightarrow & N_1  & \longrightarrow  & G_1 & \longrightarrow & Q &
\longrightarrow 1. \\
\endCD \]
Assume that there exists a finite subgroup $F_1$ of $G_1$ such that
$G_1 = N_1 \cdot F_1$. Let $F$ be the pullback of $F_1$ to $G$; then
$F$ is a finite subgroup, and one checks that $G = N \cdot F$ by using 
Lemma \ref{lem:equiv} (iv). Thus, we may finally assume that 
$N$ \emph{is a vector group}.

Under that assumption, the $N$-torsor $G \to Q$ is trivial, since
$Q$ is affine. Thus, we may choose a section $s : Q \to G$.
Also, we may choose a finite Galois extension of fields $K/k$ such
that $Q_K$ is constant. Then $s$ yields a section 
$s_K : Q_K \to G_K$, equivariant under the Galois group 
$\Gamma_K : = \Gal(K/k)$. So we may view $G(K)$ as the set of the 
$y \, s(x)$, where $y \in N(K)$ and $x \in Q(K)$, with multiplication
\[ y \, s(x) \, y' \, s(x') = y \, y'^x \, c(x,x') \, s(x x'), \]
where $c \in Z^2(Q(K),N(K))^{\Gamma_K}$. Consider the (abstract)
subgroup $H \subset N(K)$ generated by the $c(x', x'')^x$,
where $x,x',x'' \in Q(K)$. Then $H$ is finite, since $N(K)$
is killed by $p$ and $Q(K)$ is finite. Moreover, $H \, s(Q(K))$
is a subgroup of $G(K)$, in view of the above formula for the
multiplication. Clearly, $H \, s(Q(K))$ is finite and stable 
under $\Gamma_K$; thus, it corresponds to a finite (algebraic)
subgroup $G'$  of $G$. Also, we obtain as above that 
$G = N \cdot G'$. This completes the proof of Theorem 
\ref{thm:main}.

\begin{remark}\label{rem:etale}
If $k$ is perfect, then the subgroup $F$ as in Theorem 
\ref{thm:main} may be chosen \'etale. Indeed, the reduced subscheme 
$F_{\red}$ is then a subgroup by \cite[VIA.0.2]{SGA3}. Moreover,
$G(\bar{k}) = G^0(\bar{k}) F_{\red}(\bar{k})$, and hence 
$G = G^0 \cdot F_{\red}$ in view of Lemma \ref{lem:equiv} (v).

In contrast, when $k$ is imperfect, there exists a finite group $G$ 
admitting no \'etale subgroup $F$ such that $G = G^0 \cdot F$. 
Consider for example (as in \cite[VIA.1.3.2]{SGA3}) the subgroup 
$G$ of $\bG_{a,k}$ defined by the additive polynomial 
$X^{p^2} - t X^p$, where $p := \char(k)$ and $t \in k \setminus k^p$.
Then $G$ has order $p^2$ and $G^0$ has order $p$. 
If $G = G^0 \cdot F$ with $F$ \'etale, then $G^0 \cap F$ is trivial. 
Thus, $G \cong G^0 \rtimes F$ and $F$ has order $p$. Let 
$K := k(t^{1/p})$, then $F_K$ is contained in $(G_K)_{\red}$, which is 
the subgroup of $\bG_{a,K}$ defined by the additive polynomial 
$X^p - t^{1/p} X$. By counting dimensions, it follows that 
$F_K = (G_K)_{\red}$, which yields a contradiction as $(G_K)_{\red}$ 
is not defined over $k$.
\end{remark}

\begin{remark}\label{rem:def}
One may obtain information on the defect group $N \cap F$ 
by examining the steps in the proof of Theorem \ref{thm:main}
and combining Remarks \ref{rem:smooth}, \ref{rem:tor} and \ref{rem:ind}.
For instance, if $G$ is smooth, then $N \cap F$ is an extension of
the Weyl group $W(G^0,T)$ by the nilpotent group $Z_{G^0}(T) \cap F$,
where $T$ is a maximal torus of $G$. If $\char(k) = 0$ (so that $G$
is smooth), then $Z_{G^0}(T) \cap F$ is commutative. Indeed, $Z_{G^0}(T)$
is a connected nilpotent algebraic group, and hence an extension of
a semi-abelian variety $S$ by a connected unipotent algebraic group
$U$. Thus, $U \cap F$ is trivial, and hence $Z_{G^0}(T) \cap F$ is 
isomorphic to a subgroup of $S$. 
\end{remark}

\begin{remark}\label{rem:finite}
When $k$ is finite, Theorem \ref{thm:main} follows readily 
from our first reduction step (Lemma \ref{lem:smooth}) together 
with a theorem of Lang, see \cite[Thm.~2]{Lang}. More specifically, 
let $H$ be a smooth algebraic group and choose representatives
$x_1,\ldots,x_m$ of the orbits of the Galois group 
$\Gamma := \Gal(\bar{k}/k)$ in 
$\pi_0(H)(\bar{k})$. Denote by $\Gamma_i \subset \Gamma$ the isotropy 
group of $x_i$ and set $k_i := \bar{k}^{\Gamma_i}$ for $i = 1, \ldots, m$. 
Then $x_i \in \pi_0(H)(k_i)$, and hence the fiber $\pi_{k_i}^{-1}(x_i)$ 
(a torsor under $H^0_{k_i}$) contains a $k_i$-rational point. Consider 
the subfield $K := \prod_{i=1}^n k_i \subset \bar{k}$. Then the finite 
\'etale group scheme $\pi_0(H)_K$ is constant, and $\pi$ is surjective
on $K$-rational points. Thus, $\pi_0(H)$ has a quasi-complement in $H$:
the finite \'etale group scheme corresponding to the constant,
$\Gamma$-stable subgroup scheme $H(K)$ of $H_K$.    
\end{remark}

\section{Some applications}
\label{sec:applications}

We first recall two classical results on the structure of algebraic groups.
The first one is the affinization theorem  (see \cite[III.3.8]{DG} and also
\cite[VIB.12.2]{SGA3}): any algebraic group $G$ has a smallest normal
subgroup $H$ such that $G/H$ is affine. Moreover, $H$ is smooth, 
connected and contained in the center of $G^0$; we have $\cO(H) = k$
(such an algebraic group is called \emph{anti-affine}) and 
$\cO(G/H) = \cO(G)$. 

As a consequence, $H$ is the fiber at $e_G$ of the affinization 
morphism $G \to \Spec \, \cO(G)$; moreover, the formation of $H$ 
commutes with arbitrary field extensions. Also, note that $H$ is 
the largest anti-affine subgroup of $G$; we will denote $H$ 
by $G_{\ant}$. The structure of anti-affine groups is described 
in \cite{Brion} and \cite{Sancho}.
 
The second structure result is a version of a theorem of Chevalley, 
due to Raynaud (see  \cite[Lem.~IX.2.7]{Raynaud} and also 
\cite[9.2 Thm.~1]{BLR}): any connected algebraic group $G$ has 
a smallest affine normal subgroup $N$ such that $G/N$ is an abelian
variety. Moreover, $N$ is connected; if $G$ is smooth and $k$ is perfect,
then $N$ is smooth as well. We will denote $N$ by $G_{\aff}$.

We will also need the following observation:

\begin{lemma}\label{lem:quotient}
Let $G$ be an algebraic group, and $N$ a normal subgroup. Then
the quotient map $f: G \to G/N$ yields an isomorphism
$G_{\ant}/(G_{\ant} \cap N) \cong (G/N)_{\ant}$.
\end{lemma}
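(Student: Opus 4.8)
The plan is to show that $G_{\ant}$ maps onto $(G/N)_{\ant}$ and then compute the kernel of this surjection. First I would observe that $f(G_{\ant})$ is an anti-affine subgroup of $G/N$: indeed $G_{\ant}$ is anti-affine, being anti-affine is preserved under quotients (a quotient of $G$ with $\cO(G) = k$ again has only constant global functions), and $f(G_{\ant})$ is the image of $G_{\ant}$ under $f$. Hence $f(G_{\ant}) \subseteq (G/N)_{\ant}$, since $(G/N)_{\ant}$ is the largest anti-affine subgroup of $G/N$. For the reverse inclusion, I would use the characterization of $(G/N)_{\ant}$ as the kernel of the affinization morphism together with the fact that $\cO(G/N) = \cO(G/G_{\ant})$ restricted appropriately; more directly, consider the composite $G \to G/N \to (G/N)/(G/N)_{\ant}$, whose target is affine, so its kernel contains $G_{\ant}$ by minimality of $G_{\ant}$ as the smallest normal subgroup with affine quotient. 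This kernel is $f^{-1}((G/N)_{\ant})$, so $G_{\ant} \subseteq f^{-1}((G/N)_{\ant})$, i.e. $f(G_{\ant}) \supseteq f(G_{\ant})$ — to get $f(G_{\ant}) = (G/N)_{\ant}$ I need that $f$ restricted to $f^{-1}((G/N)_{\ant})$ is surjective onto $(G/N)_{\ant}$, which holds because $f$ itself is faithfully flat (a quotient map), hence surjective, and surjectivity is preserved on the preimage of any subgroup.

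Once surjectivity $f(G_{\ant}) = (G/N)_{\ant}$ is established, the induced map $G_{\ant} \to (G/N)_{\ant}$ is a faithfully flat morphism of algebraic groups, so it factors through an isomorphism $G_{\ant}/\ker \xrightarrow{\sim} (G/N)_{\ant}$, where $\ker$ is the kernel of $f|_{G_{\ant}}$. That kernel is $G_{\ant} \cap N$ by definition of $f$. This yields the claimed isomorphism $G_{\ant}/(G_{\ant} \cap N) \cong (G/N)_{\ant}$.

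The one point requiring genuine care is the inclusion $(G/N)_{\ant} \subseteq f(G_{\ant})$, i.e. that the anti-affine part of the quotient comes entirely from the anti-affine part upstairs. The clean way is via the affinization morphisms: $G_{\ant}$ is the kernel of $\alpha_G : G \to \Spec\,\cO(G)$, and similarly for $G/N$. Since $\cO(G/N) = \cO(G/G_{\ant})$, the affinization of $G/N$ receives a map compatible with that of $G$, and one checks that the composite $G \to G/N \to \Spec\,\cO(G/N)$ coincides (up to the canonical identifications) with $\alpha_G$ followed by the map $\Spec\,\cO(G) \to \Spec\,\cO(G/N)$ induced by $\cO(G/N) \hookrightarrow \cO(G)$. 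It follows that the kernel of $G/N \to \Spec\,\cO(G/N)$, which is $(G/N)_{\ant}$, is exactly the image of the kernel of $\alpha_G$, namely $f(G_{\ant})$; here I use that $f$ is surjective and that taking the image commutes suitably with these exact sequences. The main obstacle is thus just being careful about the functoriality of affinization and that images of normal subgroups behave well under faithfully flat quotient maps — everything else is formal.
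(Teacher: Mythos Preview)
Your argument for the inclusion $f(G_{\ant}) \subseteq (G/N)_{\ant}$ is fine, but the reverse inclusion has a real gap. Your first attempt notices this itself: from $G_{\ant} \subseteq f^{-1}((G/N)_{\ant})$ and surjectivity of $f$ you only obtain $f(G_{\ant}) \subseteq f\bigl(f^{-1}((G/N)_{\ant})\bigr) = (G/N)_{\ant}$, which is the inclusion you already had. The preimage $f^{-1}((G/N)_{\ant})$ contains $N$ as well as $G_{\ant}$ and is typically strictly larger than $G_{\ant}$ (e.g.\ $G = A \times \bG_m$ with $N = \bG_m$), so surjectivity of $f$ on that preimage tells you nothing about $f(G_{\ant})$.

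Your second attempt via affinization is closer in spirit, but the decisive step is asserted rather than proved: you claim that $\ker(\alpha_{G/N})$ equals $f(\ker \alpha_G)$, justified only by ``taking the image commutes suitably with these exact sequences''. That is precisely the content of the lemma. (Also, the identity $\cO(G/N) = \cO(G/G_{\ant})$ is not correct as stated; what holds is $\cO(G/G_{\ant}) = \cO(G)$ and $\cO(G/N) \hookrightarrow \cO(G)$, and these differ whenever $N \not\subset G_{\ant}$.) The missing ingredient, which is what the paper supplies, is that $(G/N)/f(G_{\ant})$ is \emph{affine}, being a quotient of $G/G_{\ant}$. Given this, the cokernel of the closed immersion $i : G_{\ant}/(G_{\ant} \cap N) \hookrightarrow (G/N)_{\ant}$ is on the one hand a quotient of the anti-affine group $(G/N)_{\ant}$, hence anti-affine, and on the other hand a subgroup of the affine group $(G/N)/f(G_{\ant})$, hence affine; so it is trivial and $i$ is an isomorphism.
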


\begin{proof}
We have a closed immersion of group schemes 
$G_{\ant}/(G_{\ant} \cap N) \to G/N$; moreover, $G_{\ant}/(G_{\ant} \cap N)$
is anti-affine. So we obtain a closed immersion of commutative
group schemes $i : G_{\ant}/(G_{\ant} \cap N) \to (G/N)_{\ant}$. 
The cokernel of $i$ is anti-affine, as a quotient of $(G/N)_{\ant}$. 
Also, this cokernel is a subgroup of $(G/N)/(G_{\ant}/(G_{\ant} \cap N))$,
which is a quotient of $G/G_{\ant}$. Since the latter is affine, it
follows that $\Coker(i)$ is affine as well, by using 
\cite[VIB.11.17]{SGA3}. Thus, $\Coker(i)$ is trivial, i.e., 
$i$ is an isomorphism.
\end{proof}

We now obtain a further version of Chevalley's structure theorem, 
for possibly nonconnected algebraic groups:

\begin{theorem}\label{thm:chevalley}
Any algebraic group $G$ has a smallest affine normal subgroup $N$ 
such that $G/N$ is proper. Moreover, $N$ is connected.
\end{theorem}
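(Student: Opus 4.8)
The plan is to reduce the non-connected case to the classical results already recalled (Raynaud's version of Chevalley's theorem and the affinization theorem), using Theorem \ref{thm:main} to bridge between $G$ and its neutral component $G^0$. First I would treat the connected case: if $G$ is connected, then $G/G_{\aff}$ is an abelian variety, hence proper, so $G_{\aff}$ is an affine normal subgroup with proper quotient; conversely any affine normal subgroup $N$ with $G/N$ proper must contain $G_{\aff}$, because $G/N$ proper and the image of $G_{\aff}$ in $G/N$ affine (a quotient of an affine group) forces that image to be finite, hence trivial by connectedness of $G_{\aff}$ — wait, one must be a little careful, as the image could be a finite group scheme; but an affine subgroup scheme of an abelian variety is finite and, being a quotient of the connected group $G_{\aff}$, is connected, hence trivial. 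So $G_{\aff}$ is the smallest such subgroup in the connected case, and it is connected.

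Next I would handle general $G$. Set $N := G_{\aff}^0$, the affine normal subgroup of $G^0$ provided by the connected case applied to $G^0$; I must first check $N$ is normal in all of $G$, not just in $G^0$. This follows from the canonical (functorial) nature of $G^0 \mapsto (G^0)_{\aff}$: any automorphism of $G^0$ preserves $(G^0)_{\aff}$, and conjugation by $G(\bar k)$-points gives such automorphisms, so $N$ is normal in $G$ (one phrases this via fppf descent or via \cite[VIB.7.3]{SGA3} as done elsewhere in the paper). Then $G/N$ sits in an exact sequence $1 \to G^0/N \to G/N \to \pi_0(G) \to 1$ with $G^0/N = G^0/(G^0)_{\aff}$ an abelian variety and $\pi_0(G)$ finite; I claim $G/N$ is proper. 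For this I would apply Theorem \ref{thm:main} to the extension $1 \to G^0/N \to G/N \to \pi_0(G) \to 1$: there is a finite subgroup $F \subset G/N$ with $G/N = (G^0/N)\cdot F$, so $G/N$ is the image of the proper scheme $(G^0/N) \rtimes F \to G/N$ (properness of $(G^0/N)\times F$ since both factors are proper — an abelian variety and a finite scheme), hence $G/N$ is proper.

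It remains to see $N = G_{\aff}^0$ is the smallest affine normal subgroup of $G$ with proper quotient, and that it is connected (the latter is immediate since $(G^0)_{\aff}$ is connected). Let $N'$ be any affine normal subgroup of $G$ with $G/N'$ proper. Then $N' \cap G^0$ is an affine normal subgroup of $G^0$, and $G^0/(N'\cap G^0)$ embeds as a subgroup of $G/N'$, hence is proper; by the connected case $N' \cap G^0 \supseteq (G^0)_{\aff} = N$. Therefore $N \subseteq N'$, proving minimality.

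The main obstacle I anticipate is the normality of $N = (G^0)_{\aff}$ in $G$: one needs the formation of $G_{\aff}$ to be compatible with automorphisms and with base change to $\bar k$ so that conjugation stabilizes it, and to phrase this cleanly for group schemes (including the inseparable case) rather than just on $\bar k$-points. The paper has already recalled that $G_{\aff}$ is ``the smallest affine normal subgroup'' and used analogous canonicity arguments (e.g. for $G_{\ant}$ in Lemma \ref{lem:quotient}, and the normality of characteristic subgroups via \cite[VIB.7.3]{SGA3}), so this should go through, but it is the one point requiring genuine care. A secondary, routine point is checking that the image of a morphism from a proper scheme is proper (so that $G/N$, being such an image, is proper), which is standard.
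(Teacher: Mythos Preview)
Your minimality argument contains a genuine error: you assert that a finite connected group scheme is trivial, but this is false in positive characteristic (e.g.\ $\alpha_p$, or any nontrivial infinitesimal group). Concretely, if $G$ is a nontrivial infinitesimal group scheme, then $G$ is connected, affine, and proper, so the smallest affine normal subgroup with proper quotient is trivial; yet $(G^0)_{\aff} = G_{\aff} = G$, since $G$ itself is not an abelian variety. Thus your proposed identification $N = (G^0)_{\aff}$ is simply wrong in this case --- as the paper itself records in Remark~\ref{rem:chevalley}. The same gap propagates to your general-case minimality, which reduces to the connected case. Your normality argument for $(G^0)_{\aff}$ in $G$ is also loose: working with $G(\bar k)$-points requires the formation of $(\cdot)_{\aff}$ to commute with base change to $\bar k$, which can fail over imperfect fields (again Remark~\ref{rem:chevalley}); the correct version, valid only for smooth $G$, passes to a separable closure where $\pi_0(G)$ is constant.

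The paper's proof is organized quite differently and avoids these issues. It first shows abstractly that a smallest $N$ exists once \emph{some} $N$ does (if $N, N'$ both work, so does $N\cap N'$, via $G/(N\cap N')\hookrightarrow G/N\times G/N'$), and that connectedness then follows from minimality (replace $N$ by $N^0$). After reducing to smooth $G$ via relative Frobenius, the paper does not attempt to show that $H:=(G^0)_{\aff}$ is normal in $G$; instead it considers the action of $G$ on the proper scheme $G/H$, takes $N := \ker(G\to\Aut_{G/H})$, and shows $\Aut^0_{G/H}$ is an abelian variety. This sidesteps both the normality question and any appeal to Theorem~\ref{thm:main}. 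Your route --- once repaired by first reducing to smooth $G$ and then using the paper's intersection argument for minimality rather than trying to identify $N$ explicitly --- would also work, and your use of Theorem~\ref{thm:main} to prove properness of $G/(G^0)_{\aff}$ is valid (though in fact any extension of a finite group scheme by an abelian variety is already proper, being a torsor under a proper group over a proper base, so Theorem~\ref{thm:main} is not needed there).
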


\begin{proof}
It suffices to show that $G$ admits an affine normal subgroup
$N$ such that $G/N$ is proper. Indeed, given another such subgroup
$N'$, the natural map 
\[ G/(N \cap N') \longrightarrow G/N \times G/N' \]
is a closed immersion, and hence $G/(N \cap N')$ is 
proper. Taking for $N$ a minimal such subgroup, it follows
that $N$ is the smallest one. Moreover, the natural morphism 
$G/N^0 \to G/N$ is finite, since it is a torsor under the finite
group $N/N^0$ (see \cite[VIA.5.3.2]{SGA3}). As a consequence, 
$G/N^0$ is proper; hence $N  = N^0$ by the minimality assumption. 
Thus, $N$ is connected. 

Also, we may reduce to the case where $G$ is smooth by using 
the relative Frobenius morphism as in the proof of Lemma
\ref{lem:smooth}. 

If in addition $G$ is connected, then we just take $N = G_{\aff}$.
In the general case, we consider the (possibly non-normal)
subgroup $H := (G^0)_{\aff}$; then the homogeneous space $G/H$ 
is proper, since $G/G^0$ is finite and $G^0/H$ is proper.  
As a consequence, the automorphism functor of 
$G/H$ is represented by a group scheme $\Aut_{G/H}$, locally of 
finite type; in particular, the neutral component $\Aut^0_{G/H}$
is an algebraic group (see \cite[Thm.~3.7]{MO}). The action of $G$ 
by left multiplication on $G/H$ yields a morphism of group schemes
\[ \varphi : G \longrightarrow \Aut_{G/H}. \]
The kernel $N$ of $\varphi$ is a closed subscheme of $H$,
and hence is affine. To complete the proof, it suffices to show
that $G/N$ is proper. In turn, it suffices to check that $(G/N)^0$ 
is proper. Since $(G/N)^0 \cong G^0/(G^0 \cap N)$, and
$G^0 \cap N$ is the kernel of the restriction
$G^0 \to \Aut^0_{G/H}$, we are reduced to showing that
$\Aut^0_{G/H}$ is proper (by using \cite[VIA.5.4.1]{SGA3} again).

We claim that $\Aut^0_{G/H}$ is an abelian variety. Indeed,  
$(G/H)_{\bar{k}}$ is a finite disjoint union of copies 
of $(G^0/H)_{\bar{k}}$, which is an abelian variety. Also,
the natural morphism $A \to \Aut^0_A$ is an isomorphism
for any abelian variety $A$. Thus, $(\Aut^0_{G/H})_{\bar{k}}$ 
is an abelian variety (a product of copies of $(G^0/H)_{\bar{k}}$); 
this yields our claim, and completes the proof.      
\end{proof}

\begin{remark}\label{rem:chevalley}
The formation of $G_{\aff}$ (for a connected group scheme $G$)
commutes with separable algebraic field extensions, as follows
from a standard argument of Galois descent. But this formation
does not commute with purely inseparable field extensions, 
in view of \cite[XVII.C.5]{SGA3}.

Likewise, the formation of $N$ as in Theorem \ref{thm:chevalley}
commutes with separable algebraic field extensions. As a
consequence, $N = (G^0)_{\aff}$ for any smooth group scheme $G$
(since $(G^0)_{\aff}$ is invariant under any automorphism of $G$, 
and hence is a normal subgroup of $G$ when $k$ is separably closed).
In particular, if $k$ is perfect and $G$ is smooth, then $N$ is 
smooth as well.

For an arbitrary group scheme $G$, we may have $N \neq (G^0)_{\aff}$,
e.g. when $G$ is infinitesimal: then $N$ is trivial, while
$(G^0)_{\aff} = G$.

We do not know if the formations of $G_{\aff}$ and $N$ commute with
arbitrary separable field extensions.
\end{remark}

The structure of proper algebraic groups is easily described as follows:

\begin{proposition}\label{prop:proper}
Given a proper algebraic group $G$, there exists an abelian 
variety $A$, a finite group $F$ equipped with an action
$F \to \Aut_A$ and a normal subgroup $D \subset F$ such that 
$D$ acts faithfully on $A$ by translations and
$G \cong (A \rtimes F)/D$, where $D$ is embedded in 
$A \rtimes F$ via $x \mapsto (x,x^{-1})$. Moreover, 
$A = G_{\ant}$ and $F/D \cong G/G_{\ant}$ are uniquely determined 
by $G$. Finally, $G$ is smooth if and only if $F/D$ is \'etale.
\end{proposition}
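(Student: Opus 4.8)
The plan is to use Theorem~\ref{thm:chevalley} (or rather its proof) to reduce the structure of a proper algebraic group to the structure of its anti-affine part together with a finite group of symmetries. First I would note that $G_{\ant}$ is a smooth connected anti-affine subgroup, hence an abelian variety (an anti-affine group which is moreover proper must be an abelian variety, since its affinization is trivial and it has no nontrivial affine part), so set $A := G_{\ant}$. The quotient $G/A$ is affine by the affinization theorem, and also proper as a quotient of the proper group $G$; hence $G/A$ is finite. Apply Theorem~\ref{thm:main} to the extension $1 \to A \to G \to G/A \to 1$: there is a finite subgroup $F \subset G$ with $G = A \cdot F$, i.e. $G \cong (A \rtimes F)/D$ where $D := A \cap F$ is embedded in $A \rtimes F$ via $x \mapsto (x,x^{-1})$, and where $F$ acts on $A$ by conjugation inside $G$. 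Since $A$ is an abelian variety, this conjugation action is an action $F \to \Aut_A$.

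Next I would pin down the action of the normal subgroup $D \subset F$. Because $A = G_{\ant}$ is central in $G^0$ (by the affinization theorem, $G_{\ant}$ lies in the center of $G^0$), and $D \subset A \subset G^0$, the subgroup $D$ acts trivially on $A$ by conjugation; but inside $A \rtimes F$ the element $(x, x^{-1})$ of $D$ acts on $A$ precisely by translation by $x$ (the conjugation action on $A$ is trivial, so the only effect of passing to $(A \rtimes F)/D$ is to identify $A$ with $A/D$ glued back by translations). More cleanly: $D = A \cap F$ sits inside $A$, and since $A$ is commutative, $D$ acts on $A$ by translations; this action is faithful simply because $D \subset A$ and translations by distinct points of an abelian variety are distinct. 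So $D$ acts faithfully on $A$ by translations, as required.

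For the uniqueness statement, I would argue that $A = G_{\ant}$ is intrinsic (it is the largest anti-affine subgroup of $G$), and then $F/D \cong F/(A \cap F) \cong (A \cdot F)/A = G/A = G/G_{\ant}$ is intrinsic as well; the induced action of $F/D$ on $A$ by outer automorphisms, modulo translations, is the conjugation action of $G/G_{\ant}$ on $G_{\ant}$. (The pair $(A, F)$ itself with its $F$-action is not unique — only $A$ and $F/D$ are — so I would state uniqueness only for those data, matching the proposition.) Finally, for smoothness: $G^0 = A^0 \cdot (\text{something})$; more precisely $G$ is smooth iff $G^0$ is smooth iff $\pi_0(G)$ carries the ``right'' infinitesimal structure, and since $A$ is smooth, $G = A \cdot F$ is smooth if and only if the étale-vs-infinitesimal content of $F$ modulo $A$ is étale, i.e. if and only if $F/D \cong G/G_{\ant}$ is étale. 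Concretely: $G$ is smooth $\iff$ $G_{\bar k}$ is reduced $\iff$ $(G/A)_{\bar k} = (G/G_{\ant})_{\bar k}$ is reduced (since $A$ is smooth and $G \to G/A$ is faithfully flat with smooth fibers, $G$ is smooth over the base iff $G/A$ is), which is exactly the condition that $F/D$ be étale.

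The main obstacle I anticipate is not any single hard step but making sure the two ``identifications'' are handled with care: (a) verifying rigorously that a proper anti-affine group is an abelian variety (one needs that a proper group scheme is automatically an extension of a finite group by an abelian variety, e.g. via Theorem~\ref{thm:chevalley} applied with $N$ trivial, or directly: the reduced neutral component is an abelian variety and anti-affineness forces the whole group to be connected and reduced), and (b) the descent argument for smoothness, i.e. that smoothness of $G$ over $k$ is equivalent to smoothness of $G/A$ because the projection $G \to G/A$ is an $A$-torsor with $A$ smooth. Both are standard but deserve an explicit sentence. The translation-faithfulness of $D$ and the uniqueness of $A$ and $F/D$ are then essentially formal consequences of Lemma~\ref{lem:quotient} and the affinization theorem recalled above.
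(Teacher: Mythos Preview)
Your proposal is correct and follows the same route as the paper: identify $A = G_{\ant}$ as an abelian variety (smooth, connected, proper), note $G/A$ is affine and proper hence finite, apply Theorem~\ref{thm:main} to obtain a finite subgroup $F$ with $G = A \cdot F$, and set $D = A \cap F$. The only cosmetic difference is that the paper phrases uniqueness via $\cO(G) \cong \cO(F/D)$ (so the affinization morphism is $G \to F/D$) while you argue directly that $F/D \cong (A\cdot F)/A = G/G_{\ant}$; your smoothness argument via the $A$-torsor $G \to G/A$ is exactly the paper's observation that this homomorphism is smooth.
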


\begin{proof}
Note that $G_{\ant}$ is a smooth connected proper algebraic group,
and hence an abelian variety. Moreover, the quotient group $G/G_{\ant}$
is affine and proper, hence finite. By Theorem \ref{thm:main}, there exists
a finite subgroup $F \subset G$ such that $G = G_{\ant} \cdot F$.
In particular, $G \cong (F \ltimes G_{\ant})/(F \cap G_{\ant})$; this implies 
the existence assertion. For the uniqueness, just note that 
$\cO(G) \cong \cO(G/A) \cong \cO(F/D)$, and this identifies the
affinization morphism to the natural homomorphism $G \to F/D$,
with kernel $A$.

If $G$ is smooth, then so is $G/A \cong F/D$; as $F/D$ is finite,
it must be \'etale. Since the homomorphism $G \to F/D$ is smooth,
the converse holds as well.  
\end{proof}

\begin{remark}\label{rem:proper}
The simplest examples of proper algebraic groups are the semi-direct
products $G = A \rtimes F$, where $F$ is a finite group acting on the
abelian variety $A$. If this action is non-trivial (for example, if $A$ is 
non-trivial and $F$ is the constant group $\bZ/2\bZ$ acting via 
$x \mapsto x^{\pm 1}$), then every morphism of algebraic groups
$f : G \to H$, where $H$ is connected, has a nontrivial kernel.
(Otherwise, $A$ is contained in the center of $G$ by the affinization
theorem). This yields examples of algebraic groups 
which admit no faithful representation in a connected algebraic group.
\end{remark}

\begin{remark}\label{rem:hom}
With the notation and assumptions of Proposition \ref{prop:proper}, 
consider a subgroup $H \subset G$ and the homogeneous space 
$X := G/H$. Then there exists an abelian variety $B$ quotient of $A$, 
a subgroup $I \subset F$ containing $D$, and a faithful homomorphism 
$I \to \Aut_B$ such that the scheme $X$ is isomorphic to the associated 
fiber bundle $F \times^I B$. Moreover, the schemes $F/I$ and $B$ are 
uniquely determined by $X$, and $X$ is smooth if and only if $F/I$ is 
\'etale.

Indeed, let $K := A \cdot H$, then 
$X \cong G \times^K K/H \cong F \times^I K/H$, where 
$I := F \cap K$. Moreover, $K/H \cong A/(A \cap H)$ is an abelian
variety. This shows the existence assertion; those on uniqueness
and smoothness are checked as in the proof of Proposition 
\ref{prop:proper}.

Conversely, given a finite group $F$ and a subgroup $I \subset F$
acting on an abelian variety $B$, the associated fiber bundle 
$F \times^I B$ exists (since it is the quotient of the projective 
scheme $F \times B$ by the finite group $I$), and is homogeneous 
whenever $F/I$ is \'etale (since $(F \times^I B)_{\bar{k}}$ is just a
disjoint union of copies of $B_{\bar{k}}$). We do not know how to
characterize the homogeneity of $F\times^ I B$ when the quotient
$F/I$ is arbitrary.
\end{remark}

Returning to an arbitrary algebraic group $G$, we have the
``Rosenlicht decomposition'' $G = G_{\ant} \cdot G_{\aff}$ 
when $G$ is connected (see e.g. \cite{Brion}). We now extend
this result to possibly nonconnected groups:

\begin{theorem}\label{thm:decomposition}
Let $G$ be an algebraic group. Then there exists an affine subgroup 
$H$ of $G$ such that $G = G_{\ant} \cdot H$. If $G$ is smooth 
and $k$ is perfect, then $H$ may be chosen smooth. 
\end{theorem}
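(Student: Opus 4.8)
The plan is to reduce to the already-proven Theorem~\ref{thm:main} by a suitable choice of normal subgroup. First I would recall that $G_{\ant}$ is smooth, connected, central in $G^0$, and that the quotient $G/G_{\ant}$ is affine. Set $Q := G/G_{\ant}$; this is an affine algebraic group, but in general it is \emph{not} finite, so Theorem~\ref{thm:main} does not apply directly. The natural fix is to first dispose of the connected case and then bootstrap: when $G$ is connected, $G = G_{\ant} \cdot G_{\aff}$ is the classical Rosenlicht decomposition, with $G_{\aff}$ affine (and smooth when $k$ is perfect and $G$ is smooth, by the version of Chevalley's theorem recalled before Lemma~\ref{lem:quotient}). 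So the real content is the passage from $G^0$ to $G$.

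Here is the approach I would carry out. Consider $H_0 := (G^0)_{\aff} = (G^0)_{\ann}$\,--- wait, rather $H_0 := (G^0)_{\aff}$, the smallest affine normal subgroup of $G^0$ with $G^0/H_0$ an abelian variety. As noted in Remark~\ref{rem:chevalley}, $H_0$ is characteristic in $G^0$ (being invariant under all automorphisms after passing to a separable closure), hence normal in $G$. Now $G^0/H_0$ is an abelian variety, so $G_{\ant}$ surjects onto $(G^0/H_0) \cdot$\,(something); more precisely, by Lemma~\ref{lem:quotient} applied to $G$ and $N = H_0$, the quotient $f: G \to G/H_0$ induces $G_{\ant}/(G_{\ant}\cap H_0) \cong (G/H_0)_{\ant}$. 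Since $G_{\ant} \subset G^0$ is central and $G_{\ant} \cap H_0$ is affine (a subgroup of $H_0$) and anti-affine, it is trivial; so $G_{\ant} \hookrightarrow (G/H_0)_{\ant}$, and in fact $G_{\ant}$ maps isomorphically onto $(G/H_0)_{\ant}$ because $(G/H_0)/G_{\ant}$-image is both a quotient of $G/G_{\ant}$ (affine) and the cokernel is anti-affine by the argument of Lemma~\ref{lem:quotient}, forcing it trivial.

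Granting that, I would then apply Theorem~\ref{thm:decomposition} inductively, or rather directly, to $\bar G := G/H_0$. The group $\bar G$ has $\bar G^0 = G^0/H_0$ an abelian variety, hence $\bar G^0$ is proper; so $\bar G_{\aff} := (\bar G^0)_{\aff}$ is trivial and $\bar G^0 = \bar G_{\ant}$. Then $\bar G / \bar G_{\ant} = \bar G/\bar G^0 = \pi_0(\bar G)$ is \emph{finite}, and Theorem~\ref{thm:main} gives a finite subgroup $\bar F \subset \bar G$ with $\bar G = \bar G_{\ant} \cdot \bar F = G_{\ant}\cdot \bar F$ (using the isomorphism above). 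Pull $\bar F$ back to the subgroup $G_1 \subset G$ containing $H_0$ with $G_1/H_0 = \bar F$; this $G_1$ is an extension of the finite group $\bar F$ by the affine group $H_0$, hence affine, and one checks $G = G_{\ant}\cdot G_1$ by using Lemma~\ref{lem:equiv}~(iv) exactly as in the proof of Lemma~\ref{lem:ind} (faithfully flat descent of the factorization $g = x \bar y$ through $f: G \to G/H_0$, then lifting). So $H := G_1$ works. For the refinement: if $k$ is perfect and $G$ is smooth, then $H_0 = (G^0)_{\aff}$ is smooth, and by Remark~\ref{rem:etale} the finite subgroup $\bar F$ may be taken étale, whence $G_1$ is an extension of an étale group by the smooth affine group $H_0$, hence smooth.

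The step I expect to be the main obstacle is verifying cleanly that $f: G \to G/H_0$ carries $G_{\ant}$ \emph{isomorphically} onto $(G/H_0)_{\ant}$, rather than merely injectively; this requires the cokernel argument of Lemma~\ref{lem:quotient} and the observation that $G_{\ant}\cap H_0$ is trivial (anti-affine $\cap$ affine). A secondary subtlety is ensuring $H_0$ is genuinely normal in all of $G$, not just in $G^0$ --- this is exactly the content of Remark~\ref{rem:chevalley} (it may fail to hold before passing to the separable closure in pathological characteristic-$p$ situations, but the cited remark asserts $N = (G^0)_{\aff}$ for smooth $G$, and for non-smooth $G$ one first reduces to the smooth case via relative Frobenius as in Lemma~\ref{lem:smooth} and Theorem~\ref{thm:chevalley}). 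Modulo these points, everything else is a routine combination of Theorem~\ref{thm:main}, Lemma~\ref{lem:equiv}~(iv), and the affinization/Chevalley structure theorems recalled at the start of this section.
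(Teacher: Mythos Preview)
Your approach is essentially the paper's: take an affine normal subgroup $N$ with $G/N$ proper, apply Theorem~\ref{thm:main} (packaged in the paper as Proposition~\ref{prop:proper}) to $G/N$ to get a finite $\bar F$, and pull back. The paper streamlines by invoking Theorem~\ref{thm:chevalley} directly for $N$, which already handles the non-smooth case and the normality in $G$; your version unwinds this by first reducing to smooth $G$ via Frobenius and then taking $H_0=(G^0)_{\aff}$, which by Remark~\ref{rem:chevalley} coincides with the paper's $N$ in the smooth case.

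One correction: your claim that $G_{\ant}\cap H_0$ is trivial because it is ``anti-affine and affine'' is false. A closed subgroup of an anti-affine group need not be anti-affine; for instance, if $G=G_{\ant}$ is an anti-affine semi-abelian variety $1\to T\to G\to A\to 1$, then $H_0=(G^0)_{\aff}=T$ and $G_{\ant}\cap H_0=T$ is a nontrivial torus. Fortunately this claim is irrelevant to your argument: what you actually use in the Lemma~\ref{lem:equiv}(iv) verification is only that $G_{\ant}$ \emph{surjects} onto $(G/H_0)_{\ant}$, and that is precisely Lemma~\ref{lem:quotient}. So the ``main obstacle'' you flag is not an obstacle at all, and the isomorphism question can simply be dropped.
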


\begin{proof}
By Theorem \ref{thm:chevalley}, we may choose an affine normal 
subgroup $N \subset G$ such that $G/N$ is proper. 
In view of Proposition \ref{prop:proper}, there exists a finite subgroup 
$F$ of $G/N$ such that $G/N = (G/N)_{\ant} \cdot F$, and
$(G/N)_{\ant}$ is an abelian variety. Let $H$ be the subgroup of $G$ 
containing $N$ such that $G/H = F$. Then $H$ is affine, 
since it sits in an extension $1 \to N \to H \to F \to 1$. 
We check that $G = G_{\ant} \cdot H$ by using Lemma 
\ref{lem:equiv} (iv).
Let $S$ be a scheme, and $g \in G(S)$. Denote by $g'$ 
the image of $g$ in $(G/N)(S)$. Then there exist
a faithfully flat morphism of finite presentation $S' \to S$ and 
$x' \in (G/N)_{\ant}(S')$, $y' \in F(S')$ such that 
$g' = x' y'$ in $(G/N)(S')$. Moreover, in view of Lemma
\ref{lem:quotient}, $x'$ lifts to some $x'' \in G_{\ant}(S'')$, 
where $S'' \to S'$ is faithfully flat of finite presentation.
So $g x"^{-1} \in G(S'')$ lifts $y'$, and hence 
$g \in G_{\ant}(S'') H(S'')$. 

If $G$ is smooth and $k$ is perfect, then $N$ may be chosen 
smooth by Theorem \ref{thm:chevalley} again; also, $F$ may be
chosen smooth by Remark \ref{rem:etale}. Then $H$ is smooth as well.
\end{proof}

We now derive from Theorem \ref{thm:decomposition} a generalization 
of our main theorem \ref{thm:main}, under the additional assumption 
of characteristic $0$ (then reductivity is equivalent to linear
reductivity, also known as full reducibility):

\begin{corollary}\label{cor:red}
Every extension (\ref{eqn:ext}) with reductive quotient group $Q$
is quasi-split when $\char(k) = 0$. 
\end{corollary}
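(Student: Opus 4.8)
The plan is to reduce the statement to Theorem \ref{thm:main} by peeling off the abelian-variety and torus parts of $Q$, using the structure of reductive groups in characteristic $0$ together with the already-established Theorem \ref{thm:decomposition}. The extension in question is $1 \to N \to G \xrightarrow{f} Q \to 1$ with $Q$ reductive. First I would invoke Theorem \ref{thm:decomposition} applied to $G$: there is an affine subgroup $H \subset G$ with $G = G_{\ant} \cdot H$. Since $G_{\ant}$ is anti-affine, it is central in $G^0$, and its image $f(G_{\ant})$ in $Q$ is an anti-affine subgroup of the affine group $Q$, hence trivial; so $G_{\ant} \subset N$. Therefore $f$ restricts to a surjection $H \to Q$ with kernel $N \cap H =: N_H$, and $N_H$ is affine (being a subgroup of the affine group $H$). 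If we produce a quasi-complement of $N_H$ in $H$, say a subgroup $H' \subset H$ with $H = N_H \cdot H'$ and $N_H \cap H'$ finite, then Lemma \ref{lem:equiv}(iv) gives $G = G_{\ant} \cdot H = G_{\ant} \cdot N_H \cdot H' = N \cdot H'$ (the last equality because $G_{\ant} \cdot N_H \subseteq N$ and $G = N \cdot H'$ follows from the faithful-flatness criterion), with defect $N \cap H'$ a subgroup of $N_H \cap H'$, hence finite. So we have \emph{reduced to the affine case}: $G$, $N$ both affine, $Q$ reductive, $\char(k) = 0$.

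In the affine setting the key tool is linear reductivity: since $\char(k) = 0$, the reductive group $Q$ is linearly reductive, so every extension of $Q$-modules splits, and more relevantly every surjection of affine algebraic groups onto $Q$ admits a section — this is Mostow's theorem (or the cohomological statement $H^2(Q, -) = 0$ for the relevant Hochschild cohomology of the unipotent radical, built up as an iterated extension of $Q$-modules). Concretely I would first pass to $G_{\bar{k}}$ if needed, but it is cleaner to argue directly: let $R_u(N)$ be the unipotent radical of $N$ (geometrically), which is normal in $G$ since it is characteristic in $N$. Working with $1 \to N/R_u(N) \to G/R_u(N) \to Q \to 1$ and $1 \to R_u(N) \to G' \to Q' \to 1$ separately via Lemma \ref{lem:ind}, one reduces to the two cases (a) $N$ is reductive and (b) $N$ is unipotent. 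In case (b), $N$ is a successive extension of vector groups with $Q$-action, so $H^2(Q,N)$ vanishes by linear reductivity and the extension splits outright — even stronger than quasi-split. In case (a), $N$ reductive affine, one uses that $Q$ is also linearly reductive; here I would reduce further by the same $N' = $ (identity component, or the central torus, or the semisimple part) device to the case where $N$ is a torus or semisimple, and in each sub-case invoke the vanishing of the relevant $H^2$ or directly Mostow's splitting theorem for linearly reductive $N$.

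**The main obstacle** I anticipate is handling the \emph{non-smooth, non-connected} behaviour cleanly: in characteristic $0$ all algebraic groups are smooth (by \cite[VIB.1.6.1]{SGA3}), which is a great simplification, but $Q$ and $N$ need not be connected, and one must be careful that "reductive" for $Q$ (a condition on $Q_{\bar{k}}$: smooth, affine, trivial unipotent radical) is compatible with the component-group extension. The cleanest route is probably: first use Theorem \ref{thm:main} itself to split off $\pi_0(Q)$. That is, pull back the extension along $Q^0 \hookrightarrow Q$ to get $1 \to N \to \tilde{G} \to Q^0 \to 1$ and, complementarily, use that $G/\tilde{G} \cong \pi_0(Q)$ is finite so Theorem \ref{thm:main} gives a finite $F \subset G$ with $G = \tilde{G} \cdot F$; then it suffices to quasi-split the connected-quotient extension $1 \to N \to \tilde{G} \to Q^0 \to 1$ and combine. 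With $Q^0$ connected reductive in characteristic $0$, the splitting of any affine extension over it is exactly Mostow, and the reduction of the previous paragraph handles the passage between the affine and general $N$. So the structure is: Theorem \ref{thm:decomposition} to make $G$ affine; Theorem \ref{thm:main} to make $Q$ connected; then Mostow / linear reductivity for the connected reductive affine case, with Lemma \ref{lem:ind} absorbing the bookkeeping.
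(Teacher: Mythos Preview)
Your reduction to the affine case via Theorem \ref{thm:decomposition} matches the paper's opening move, and your instinct to invoke Mostow is correct. But the proposal then contains a genuine error and leaves the core of the argument unaddressed.

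First, the claim ``every surjection of affine algebraic groups onto $Q$ admits a section --- this is Mostow's theorem'' is false: the quotient map $\GL_n \to \PGL_n$ has no section. Mostow's theorem only furnishes a Levi complement to the unipotent radical of a given affine group, not a section of an arbitrary surjection onto a reductive target. The paper uses Mostow in the correct way: it applies it to the affine group $H$ itself, writing $H = R_u(H) \rtimes L$ with $L$ reductive. Since $R_u(H)$ is normalized by $H$ and centralized by $G_{\ant}$, it is normal in $G$ and lies in $N$; so $f$ restricts to a surjection $L \to Q$, and one is reduced to the case where $G$ itself is reductive and $N = \Ker(f|_L)$ is a normal (hence reductive) subgroup.

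Second, this reductive-in-reductive situation is precisely your ``case (a),'' which you dispose of by ``invoke the vanishing of the relevant $H^2$ or directly Mostow's splitting theorem for linearly reductive $N$.'' Neither applies: for $N$ a torus the extension groups do not vanish (already $1 \to \bG_m \to \GL_n \to \PGL_n \to 1$ is nontrivial), and for $N$ semisimple there is no cohomological shortcut. This case is where the paper does the actual work: after replacing $N$ by $N^0$, one writes $G^0 = D(G^0) \cdot R(G^0)$ and $N = D(N) \cdot R(N)$; the quasi-complement to $D(N)$ in $D(G^0)$ is the neutral component of its centralizer (a normal semisimple subgroup), while the quasi-complement to the torus $R(N)$ in $R(G^0)\cdot F$ (with $F$ a finite quasi-complement to $G^0$ from Theorem \ref{thm:main}) is produced by rationally splitting the surjection of character lattices $X^*(G^0) \to X^*(N)$ as $F(\bar k)\rtimes\Gamma$-modules and clearing denominators. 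Your outline supplies none of these ingredients. Note also that Lemma \ref{lem:ind} is stated under the standing hypothesis that $Q$ is finite, so citing it for reductive $Q$ requires reproving it in that generality; and your ``combine'' step after splitting off $\pi_0(Q)$ would need $F$ to normalize the quasi-complement found in $\tilde G$, which is not automatic.
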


\begin{proof}
Choose an affine subgroup $H \subset G$ such that 
$G = G_{\ant} \cdot H$ and denote by $R_u(H)$ its unipotent radical.
By a result of Mostow (see \cite[Thm.~6.1]{Mostow}), $H$ has a Levi 
subgroup, i.e., a fully reducible algebraic subgroup $L$ such that 
$H = R_u(H) \rtimes L$. Note that $R_u(H)$ is normal in $G$, since
it is normalized by $H$ and centralized by $G_{\ant}$. It follows that
$G_{\ant} \cdot R_u(H)$ is normal in $G$, and 
$G = (G_{\ant} \cdot R_u(H)) \cdot L$. Also, note that the quotient
map $f : G \to Q$ sends $G_{\ant}$ to $e_Q$ (since $Q$ is affine),
and $R_u(H)$ to $e_Q$ as well (since $Q$ is reductive). It follows
that the sequence 
\[ 1 \longrightarrow N \cap L \longrightarrow L 
\stackrel{f}{\longrightarrow} Q \to 1 \]
is exact, where $N = \Ker(f)$. If $N \cap L$ has a quasi-complement
$H$ in $L$, then $H$ is a quasi-complement to $N$ in $G$
(as follows e.g. from Lemma \ref{lem:equiv} (v)). Thus, we may assume
that $G$ is \emph{reductive}. Since every quasi-complement to $N^0$
in $G$ is a quasi-complement to $N$, we may further assume that $N$
is \emph{connected}.

We have a canonical decomposition 
\[ G^0 = D(G^0) \cdot R(G^0), \]
where the derived subgroup $D(G^0)$ is semisimple, the 
radical $R(G^0)$ is a central torus, and $D(G^0) \cap R(G^0)$ 
is finite (see e.g. \cite[XXII.6.2.4]{SGA3}). 
Thus, $G = D(G^0) \cdot (R(G^0) \cdot F)$, where 
$F \subset G$ is a quasi-complement to $G^0$. Likewise, 
$N = D(N) \cdot R(N)$, where $D(N) \subset D(G^0)$, 
$R(N) \subset R(G^0)$ and both are normal in $G$. 
Denote by $S$ the neutral component of the centralizer of 
$D(N)$ in $D(G^0)$. Then $S$ is a normal semisimple subgroup of 
$G$, and a quasi-complement to $D(N)$ in $D(G^0)$. 
If $R(N)$ admits a quasi-complement $T$ in $R(G^0) \cdot F$, 
then one readily checks that $S \cdot T$ is a quasi-complement 
to $N$ in $G$. As a consequence, we may assume replace $G$ with 
$R(G^0) \cdot F$, and hence assume that $G^0$ is a \emph{torus}.

Denote by $X^*(G^0)$ the character group of $G^0_{\bar{k}}$;
this is a free abelian group of finite rank, equipped with
a continuous action of $F(\bar{k}) \rtimes \Gamma$, 
where $\Gamma$ denotes the absolute Galois group of $k$. 
Moreover, we have a surjective homomorphism 
$\rho : X^*(G^0) \to X^*(N)$, equivariant for 
$F(\bar{k}) \rtimes \Gamma$. Thus, $\rho$ splits
over the rationals, and hence there exists a subgroup
$\Lambda \subset X^*(G^0)$, stable by 
$F(\bar{k}) \rtimes \Gamma$, which is mapped
isomorphically by $\rho$ to a subgroup of finite index
of $X^*(N)$. The quotient $X^*(G^0)/\Lambda$ corresponds
to a subtorus $H \subset G^0$, normalized by $G$, which 
is a quasi-complement to $N$ in $G^0$. So $H \cdot F$ 
is the desired quasi-complement to $N$ in $G$. 
\end{proof}

\begin{remark}\label{rem:red}
Corollary \ref{cor:red} does not extend to positive 
characteristics, due to the existence of groups
without Levi subgroups (see \cite[App.~A.6]{CGP}, 
\cite[Sec.~3.2]{McNinch}). As a specific example, 
when $k$ is perfect of characteristic $p >0$, 
there exists a non-split extension of algebraic groups
\[ 1 \longrightarrow V \longrightarrow  G 
\stackrel{f}{\longrightarrow} \SL_2 \longrightarrow 1, \]
where $V$ is a vector group on which $\SL_2$ acts linearly
via the Frobenius twist of its adjoint representation.
We show that this extension is not quasi-split: otherwise,
let $H$ be a quasi-complement to $N$ in $G$. Then so
is the reduced neutral component of $H$, and hence we
may assume that $H$ is smooth and connected. The quotient 
map $f$ restricts to an isogeny $H \to \SL_2$, and hence to 
an isomorphism. Thus, the above extension is split, 
a contradiction.
\end{remark}

Next, we obtain an analogue of the Levi decomposition (see 
\cite{Mostow} again) for possibly nonlinear algebraic groups:
 
\begin{corollary}\label{cor:mostow}
Let $G$ be an algebraic group over a field of characteristic
$0$. Then $G = R \cdot S$, where $R \subset G$ is the largest 
connected solvable normal subgroup, and $S \subset G$ is
an algebraic subgroup such that $S^0$ is semisimple; also,
$R \cap S$ is finite. 
\end{corollary}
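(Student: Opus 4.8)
\textbf{Proof proposal for Corollary \ref{cor:mostow}.}

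The plan is to mimic the proof of Corollary \ref{cor:red}, bootstrapping from the affine (linear) case, which is the classical Levi--Mostow theorem, to the general case via the decomposition $G = G_{\ant} \cdot H$ supplied by Theorem \ref{thm:decomposition}. First I would recall that $R$, the largest connected solvable normal subgroup of $G$, exists: it may be characterized as the radical $R(G^0)$ of the neutral component, which is a characteristic subgroup of $G^0$ and hence normal in $G$. The target statement then amounts to producing an algebraic subgroup $S \subset G$ with $S^0$ semisimple, $G = R \cdot S$, and $R \cap S$ finite, i.e.\ a quasi-complement to $R$ in $G$ whose neutral component is semisimple.

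The key steps, in order. (1) Choose by Theorem \ref{thm:decomposition} an affine subgroup $H \subset G$ with $G = G_{\ant} \cdot H$; since $\char(k) = 0$ one may take $H$ smooth. Note $G_{\ant}$ is a (smooth connected commutative) anti-affine group, so it is centralized by all of $G$ and in particular $G_{\ant} \subset R(G^0) = R$. (2) Apply the classical Levi decomposition of Mostow \cite[Thm.~6.1]{Mostow} to the linear group $H$: write $H = R_u(H) \rtimes L$ with $L$ reductive, then further decompose $L^0 = D(L^0) \cdot R(L^0)$ into its semisimple derived subgroup and its central torus (radical), with finite intersection. Assembling, the largest connected solvable normal subgroup $R_H$ of $H$ is $R_u(H) \cdot R(L^0)$, and $D(L^0) \cdot (\text{a quasi-complement to } H^0 \text{ in } H)$ is a candidate quasi-complement $S_H$ to $R_H$ in $H$ with $S_H^0 = D(L^0)$ semisimple; this uses Theorem \ref{thm:main} to obtain the finite quasi-complement to $H^0$ in $H$, exactly as in the reductive-quotient corollary. (3) Relate $R_H$ and $S_H$ back to $G$: because $G_{\ant}$ is central and connected solvable, $G_{\ant} \cdot R_H$ is a connected solvable normal subgroup of $G$, and a dimension/normality check shows it equals $R = R(G^0)$ (indeed $R/G_{\ant} \cap R$ maps onto $R_H$ and conversely). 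Then set $S := S_H$, viewed as a subgroup of $G$. Since $G = G_{\ant} \cdot H = G_{\ant} \cdot R_H \cdot S_H = R \cdot S$ — the middle equality by Lemma \ref{lem:equiv} (iv) applied twice — and $S^0 = D(L^0)$ is semisimple, it remains only to see $R \cap S$ is finite. (4) For finiteness of the defect group: $S^0 = D(L^0)$ is semisimple, so $R \cap S^0$ is a connected solvable normal subgroup of the semisimple group $S^0$, hence trivial; therefore $R \cap S$ is finite (it injects into $\pi_0(S)$, which is finite since $H$, and hence $S$, has finitely many components). I would package this last observation through Lemma \ref{lem:equiv} (v), checking $G(\bar k) = R(\bar k) S(\bar k)$, which suffices as $Q = G/R$ is smooth in characteristic $0$.

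The main obstacle I anticipate is step (3): verifying cleanly that $G_{\ant} \cdot R_H$ really is the \emph{largest} connected solvable normal subgroup $R$ of $G$, and not merely a connected solvable normal subgroup. The subtlety is that $R(G^0)$ is defined intrinsically inside $G^0$, whereas $R_H$ lives in the (non-normal) subgroup $H$; one must argue that the image of $R(G^0)$ in $H^0 \cong G^0/(G^0 \cap G_{\ant})$ (using $G^0 = G_{\ant} \cdot H^0$) is exactly $R_H = R(H^0)$, which follows because $G^0 \cap G_{\ant}$ is central and the radical is compatible with central isogenies, but deserves an explicit line. A secondary point of care is that $S$, defined a priori only as a subgroup of $H$, is automatically a subgroup of $G$ via the inclusion $H \hookrightarrow G$ — this is immediate — but one should confirm $R \cap S = R_H \cap S_H$ (again a consequence of $G_{\ant} \subset R$ being central, so it contributes nothing new to the intersection), which is what makes the finiteness of the defect group in $G$ inherit from its finiteness in $H$.
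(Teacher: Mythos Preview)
Your approach is correct in outline but takes a longer road than the paper does, and in doing so creates exactly the bookkeeping obstacle you flag in step (3). The paper's proof is a three-line reduction to Corollary \ref{cor:red} used as a black box: since $G_{\ant}$ is connected, solvable (indeed commutative), and normal, one has $G_{\ant} \subset R$, so $G/R$ is affine; and since $R/G_{\ant}$ contains the radical of $(G/G_{\ant})^0$, the quotient $(G/R)^0$ is semisimple, hence $G/R$ is reductive. Corollary \ref{cor:red} then hands you a quasi-complement $S$ to $R$ in $G$ directly, and $S^0$ is semisimple because it is isogenous to $(G/R)^0$. No need to choose $H$, decompose it, or match $R(H^0)$ against $R(G^0)$.

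What you do instead is unpack the proof of Corollary \ref{cor:red} and re-run it inside $H$. This works, but it forces you to identify $R$ with $G_{\ant} \cdot R(H^0)$, which (as you note) requires tracking radicals across the central isogeny $H^0 \to G^0/G_{\ant}$; the paper sidesteps this entirely by working with the quotient $G/R$ rather than a subgroup $H$. Your route buys nothing extra and costs the verification in step (3). One small slip to fix in step (4): the intersection $R \cap S^0$ need not be connected; what is true is that $(R \cap S^0)^0$ is a connected solvable normal subgroup of the semisimple group $S^0$, hence trivial, so $R \cap S^0$ is finite, and then $R \cap S$ is finite because it maps to the finite group $S/S^0$ with finite kernel $R \cap S^0$.
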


\begin{proof}
By a standard argument, $G$ has a largest connected 
solvable normal subgroup $R$. The quotient $G/R$ is
affine, since $R \supset G_{\ant}$. Moreover, $R/G_{\ant}$ 
contains the radical of $G/G_{\ant}$, and hence 
$(G/R)^0$ is semisimple. In particular, $G/R$ is reductive.
So Corollary \ref{cor:red} yields the existence of the
quasi-complement $S$. 
\end{proof}

\begin{remark}\label{rem:mostow}
One may ask for a version of Corollary \ref{cor:mostow} 
in which the normal subgroup $R$ is replaced with an analogue 
of the unipotent radical of a linear algebraic group, and 
the quasi-complement $S$ is assumed to be reductive. But 
such a version would make little sense when $G$ is an
anti-affine semi-abelian variety (for example, when
$G$ is the extension of an abelian variety $A$ by $\bG_m$, 
associated with an algebraically trivial line bundle 
of infinite order on $A$). Indeed, such a group $G$ has a 
largest connected reductive subgroup: its maximal torus, 
which admits no quasi-complement.  

Also, recall that the radical $R$ may admit no complement in 
$G$, e.g. when $G = \GL_n$ with $n \geq 2$. 

Finally, one may also ask for the uniqueness of a minimal
quasi-complement in Corollary \ref{cor:mostow}, up to conjugacy
in $R(k)$ (as for Levi complements, see \cite[Thm.~6.2]{Mostow}).
But this fails when $k$ is algebraically closed 
and $G$ is the semi-direct product of an abelian variety $A$ 
with a group $F$ of order $2$. Denote by $\sigma$ the involution
of $A$ induced by the non-trivial element of $F$; then $R = A$,
and the complements to $R$ in $G$ are exactly the subgroups 
generated by the involutions $x \sigma$ where $x \in A^{-\sigma}(k)$,
i.e., $\sigma(x) = x^{-1}$. The action of $R(k)$ on complements 
is given by $y x \sigma y^{-1} = x y \sigma(y)^{-1} \sigma$;
moreover, the homomorphism $A \to A^{-\sigma}$, 
$y \mapsto y \sigma(y)^{-1}$ is generally not surjective.
This holds for example when $A = (B \times B)/C$, where $B$ is 
a nontrivial abelian variety, $C$ is the subgroup of $B \times B$ 
generated by $(x_0,x_0)$ for some $x_0 \in B(k)$ of order $2$,
and $\sigma$ arises from the involution 
$(x,y) \mapsto (y^{-1},x^{-1})$ of $B \times B$; then 
$A^{-\sigma}$ has $2$ connected components.
\end{remark}

Another consequence of Theorem \ref{thm:decomposition} concerns 
the case where $k$ is finite; then every anti-affine 
algebraic group is an abelian variety (see \cite[Prop.~2.2]{Brion}). 
This yields readily:

\begin{corollary}\label{cor:finite}
Let $G$ be an algebraic group over a finite field. Then $G$ sits 
in an extension of algebraic groups
\[ 1 \longrightarrow F \longrightarrow A \times H \longrightarrow G
\longrightarrow 1, \]
where $F$ is finite, $A$ is an abelian variety, and $H$ is affine. 
If $G$ is smooth, then $H$ may be chosen smooth as well.
\end{corollary}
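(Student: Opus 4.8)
The plan is to deduce Corollary \ref{cor:finite} directly from Theorem \ref{thm:decomposition} together with the cited fact that every anti-affine algebraic group over a finite field is an abelian variety (\cite[Prop.~2.2]{Brion}). First I would set $A := G_{\ant}$; by the quoted result, $A$ is an abelian variety. By Theorem \ref{thm:decomposition}, there exists an affine subgroup $H \subset G$ with $G = A \cdot H$, and $H$ may be taken smooth when $G$ is smooth, since every finite field is perfect. Recall that $A \cdot H$ is by definition the quotient of the semi-direct product $A \rtimes H$ (which is in fact the direct product $A \times H$, because $A = G_{\ant}$ lies in the center of $G^0$ and hence is centralized by $H$) by the subgroup $A \cap H$ embedded via $x \mapsto (x, x^{-1})$.

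The key step is then just to read off the extension: setting $F := A \cap H$, embedded in $A \times H$ as a subgroup via $x \mapsto (x, x^{-1})$, we obtain from the definition of $A \cdot H$ a faithfully flat morphism $A \times H \to G$ with kernel $F$, i.e., an exact sequence
\[
1 \longrightarrow F \longrightarrow A \times H \longrightarrow G \longrightarrow 1.
\]
It remains to check that $F$ is finite. This holds because $A \cap H$ is a closed subgroup of $A$ that is also affine (being closed in $H$); an affine subgroup scheme of an abelian variety is finite, since abelian varieties are proper and a proper affine scheme is finite over $k$. Finally, when $G$ is smooth we arranged $H$ to be smooth, and $A$ is smooth, so $A \times H$ is smooth; this gives the last assertion.

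I expect no serious obstacle here: the corollary is essentially a restatement of Theorem \ref{thm:decomposition} specialized to finite fields, where the anti-affine part is automatically an abelian variety. The only points requiring a word of justification are the identification of $A \rtimes H$ with $A \times H$ (using centrality of $G_{\ant}$ in $G^0$ from the affinization theorem) and the finiteness of $A \cap H$ (using properness of $A$ against affineness of $H$), both of which are routine.
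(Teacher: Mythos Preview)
Your overall strategy matches the paper's: set $A = G_{\ant}$ (an abelian variety since $k$ is finite), take an affine $H$ with $G = A \cdot H$ from Theorem~\ref{thm:decomposition} (smooth when $G$ is, as finite fields are perfect), and read off the extension with kernel $F = A \cap H$, which is finite as a proper affine group scheme. All of that is correct and is exactly what ``this yields readily'' is pointing to.

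The gap is in your identification of $A \rtimes H$ with the direct product $A \times H$. You write that $A = G_{\ant}$ lies in the center of $G^0$ ``and hence is centralized by $H$''. But this inference fails: the affinization theorem only gives $A \subset Z(G^0)$, whereas $H$ need not lie in $G^0$. Concretely, let $E$ be an elliptic curve over a finite field and take $G = E \rtimes \bZ/2\bZ$, the nontrivial element acting by inversion. Then $G_{\ant} = G^0 = E$, and any affine $H$ with $G = E \cdot H$ must meet the nonidentity component; such elements conjugate $x \in E$ to $-x$, so $H$ does not centralize $A$ and $A \rtimes H$ is not the direct product. In fact no surjection from a genuine direct product $A' \times H'$ (abelian variety times affine) onto this $G$ with finite kernel exists: the image of $A'$ is anti-affine and hence lands in $E$, while the image of $H'$ must commute with it and hence also lands in $Z_G(E) = E$, so the total image misses the other component. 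Your argument therefore establishes the exact sequence with $A \rtimes H$; the passage to a direct product cannot be justified by centrality in $G^0$.
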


Returning to an arbitrary base field, we finally obtain the existence 
of equivariant compactifications of homogeneous spaces:

\begin{theorem}\label{thm:compactification}
Let $G$ be an algebraic group, and $H$ a closed subgroup. 
Then there exists a projective scheme $X$ equipped with an
action of $G$, and an open $G$-equivariant immersion
$G/H \hookrightarrow X$ with schematically dense image.
\end{theorem}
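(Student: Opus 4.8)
The plan is to reduce to the case where $G$ is smooth and connected, where classical results apply, and then build up the general case using the structure theory developed above, especially Theorem \ref{thm:main} and Theorem \ref{thm:decomposition}. First I would dispose of infinitesimal complications: as in the proof of Lemma \ref{lem:smooth} and Theorem \ref{thm:chevalley}, using a high relative Frobenius kernel $G_n \subset G$ (and its intersection with $H$), I can replace $G$ by $G/G_n$ and $H$ by its image, which is legitimate because $G \to G/G_n$ is finite and bijective, so a projective equivariant compactification of $(G/G_n)/(H/(H \cap G_n))$ pulls back to one of $G/H$ (the underlying topological space and the $G$-action are unchanged up to the purely inseparable isogeny, and one checks schematic density survives). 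Thus we may assume $G$ smooth.

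Next I would handle the component group. Write $Q := \pi_0(G)$, a finite étale group scheme, and let $X^0$ be a projective equivariant compactification of the homogeneous space $G^0/(H \cap G^0)$ under $G^0$ — the existence of such an $X^0$ is the connected case, addressed below. The subtlety is that $G$ does not act on $X^0$, only $G^0$ does. To remedy this, I would form the ``induced'' compactification: $G/H$ is an open subscheme of $G \times^{G^0} X^0$ (the associated fiber bundle, which exists as $G/G^0$ is finite étale, being a quotient of the projective scheme $G \times X^0$ by the finite group $G^0 \cap$ acting appropriately — more precisely one takes $G \times^{H^0} X^0$ or works fppf-locally to trivialize $Q$). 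Concretely, after a finite Galois base change $k'/k$ making $Q_{k'}$ constant, $(G/H)_{k'}$ is a disjoint union of copies of $(G^0/(H\cap G^0))_{k'}$, each of which embeds in a copy of $X^0_{k'}$; the disjoint union of these copies is a projective $G_{k'}$-equivariant compactification, and Galois descent (the construction being canonical, hence $\Gamma$-stable) produces the desired $X$ over $k$. Schematic density of the image and quasi-projectivity descend routinely.

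It remains to treat $G$ smooth and connected. Here I would invoke Theorem \ref{thm:decomposition} to write $G = G_{\ant} \cdot H'$ with $H'$ affine (smooth, since $k$ may be assumed perfect after the reductions — or one argues directly). Since $G_{\ant}$ is central in $G$ and anti-affine, it acts on $G/H$ through its quotient by $G_{\ant} \cap H$, an abelian variety $B$, and $G/H$ fibers $G$-equivariantly over the abelian variety $B$ with affine-type fibers; alternatively, by Chevalley's theorem (the version recalled before Lemma \ref{lem:quotient}), $G_{\aff}$ acts on $G/H$ with the base $G/(G_{\aff}\cdot H)$ an abelian variety $A$, and the fiber $G_{\aff}/(G_{\aff}\cap H)$ is a quasi-projective homogeneous space under the affine group $G_{\aff}$. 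For affine $G_{\aff}$ one embeds $G_{\aff}/(G_{\aff}\cap H)$ $G_{\aff}$-equivariantly into a projective space $\bP(V)$ via a linearized line bundle, getting a projective equivariant compactification $Y$ of the fiber; then $G \times^{G_{\aff}} Y$ is a projective scheme (a $\bP(V)$-bundle, or $Y$-bundle, over $A$, hence projective over the projective $A$) carrying a $G$-action, and $G/H \hookrightarrow G \times^{G_{\aff}} Y$ is the required open immersion with schematically dense image.

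The main obstacle I anticipate is the bookkeeping in the non-connected, non-reduced reductions: ensuring that the associated-fiber-bundle and Galois-descent constructions genuinely yield a \emph{projective} scheme with a \emph{$G$-action} (not merely a $G^0$-action), and that \emph{schematic} density — rather than mere topological density — is preserved under the finite bijective Frobenius quotient and under passing to fiber bundles. Quasi-projectivity of the homogeneous space itself is already known (\cite[Cor.~VI.2.6]{Raynaud}), so the new content is purely the equivariant projective \emph{completion}, and the delicate point is threading the $G$-equivariance through each reduction step.
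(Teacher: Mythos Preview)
There is a genuine gap in the core step. In your smooth connected case you compactify the fiber $G_{\aff}/(G_{\aff}\cap H)$ to a projective $Y$ and then form $G \times^{G_{\aff}} Y$. But $G \times^{G_{\aff}} \bigl(G_{\aff}/(G_{\aff}\cap H)\bigr) \cong G/(G_{\aff}\cap H)$, not $G/H$: the associated bundle with structure group $G_{\aff}$ recovers the quotient by $G_{\aff}\cap H$, and the further quotient by $H/(H\cap G_{\aff})$ is missing. To land on $G/H$ you must take the structure group to be $G_{\aff}\cdot H$ (so that $G\times^{G_{\aff}\cdot H}\bigl((G_{\aff}\cdot H)/H\bigr)=G/H$), and then you need $Y$ to be $(G_{\aff}\cdot H)$-equivariant, with an ample $(G_{\aff}\cdot H)$-linearized sheaf, not merely $G_{\aff}$-equivariant. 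Since $G_{\aff}\cdot H$ need not be affine, your appeal to the affine case does not apply directly.

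This is exactly the point the paper isolates. Writing $N$ for the affine normal subgroup of Theorem~\ref{thm:chevalley}, the paper shows that $(N\cdot H)_{\ant}=H_{\ant}\subset H$ (via Lemma~\ref{lem:quotient} and the observation that $(N\rtimes H)_{\ant}=H_{\ant}$ because $N$ is affine). Hence $N\cdot H$ acts on $(N\cdot H)/H$ through an \emph{affine} quotient, so Chevalley's stabilizer-of-a-line construction yields a projective $Y$ with an $(N\cdot H)$-linearized ample sheaf; then \cite[Prop.~7.1]{MFK} produces $G\times^{N\cdot H}Y$. Note also that the paper's argument works directly for arbitrary $G$, with no reduction to the smooth or connected case; your Frobenius and component-group reductions are not needed, and each has its own issue (the Frobenius step yields only a finite bijective map $G/H\to G/(H\cdot G_n)$, not an open immersion of $G/H$ into the pulled-back compactification; the induced bundle $G\times^{G^0}X^0$ compactifies $G/(H\cap G^0)$ rather than $G/H$, for the same reason as above).
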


\begin{proof}
When $G$ is affine, this follows from a theorem of Chevalley 
asserting that $H$ is the stabilizer of a line $L$ in 
a finite-dimensional $G$-module $V$ (see \cite[VIB.11.16]{SGA3}).
Indeed, one may take for $X$ the closure of the $G$-orbit of
$L$ in the projective space of lines of $V$; then $X$ satisfies 
the required properties in view of \cite[III.3.5.2]{DG}. 
Note that $X$ is equipped with an ample $G$-linearized invertible 
sheaf.

When $G$ is proper, the homogeneous space $G/H$ is proper 
as well, and hence is projective by \cite[Cor.~VI.2.6]{Raynaud} 
(alternatively, this follows from the structure of $X$ described
in Remark \ref{rem:proper}).

In the general case, Theorem \ref{thm:chevalley} yields
an affine normal subgroup $N$ of $G$ such that $G/N$ is
proper. Then $N \cdot H$ is a subgroup of $G$, and 
$G/(N \cdot H)$ is proper as well, hence projective. 
It suffices to show the existence of a projective scheme 
$Y$ equipped with an action of $N \cdot H$, an open immersion 
$(N \cdot H)/H \to Y$ with schematically dense image, 
and a $N \cdot H$-linearized ample line bundle: indeed, 
by \cite[Prop.~7.1]{MFK} applied to the projection 
$G \times Y \to Y$ and the $N \cdot H$-torsor 
$G \to G/(N \cdot H)$, the associated fiber bundle 
$G \times^{N \cdot H} Y$ yields the desired equivariant
compactification. In view of Chevalley's theorem used in 
the first step, it suffices in turn to check that
$N \cdot H$ acts on $(N \cdot H)/H$ via an affine quotient
group; equivalently, $(N \cdot H)_{\ant} \subset H$.

By Lemma \ref{lem:quotient}, $(N \cdot H)_{\ant}$
is a quotient of $(N \rtimes H)_{\ant}$. The latter is the fiber
at the neutral element of the affinization morphism
$N \rtimes H \to \Spec \,\cO(N \rtimes H)$. Also, 
$N \rtimes H \cong N \times H$ as schemes, $N$ is affine,
and the affinization morphism commutes with products; thus,
$(N \rtimes H)_{\ant} = H_{\ant}$. As a consequence, 
$(N \cdot H)_{\ant} = H_{\ant}$; this completes the proof.
\end{proof}

\begin{remark}\label{rem:compactification}
When $\char(k) = 0$, the equivariant compactification $X$ of
Theorem \ref{thm:compactification} may be taken smooth, as
follows from the existence of an equivariant desingularization
(see \cite[Prop.~3.9.1, Thm.~3.36]{Kollar}).

In arbitrary characteristics, $X$ may be taken normal if $G$ is 
smooth. Indeed, the $G$-action on any equivariant 
compactification $X$ stabilizes the reduced subscheme $X_{\red}$ 
(since $G \times X_{\red}$ is reduced), and lifts to an action
on its normalization $\tilde{X}$ (since $G \times \tilde{X}$ 
is normal). But the existence of regular compactifications 
(equivariant or not) is an open question.

Over any imperfect field $k$, there exist smooth connected
algebraic groups $G$ having no smooth compactification.
Indeed, we may take for $G$ the subgroup of $\bG_a \times \bG_a$
defined by $y^p -y - t x^p =0$, where $p := \char(k)$ 
and $t \in k \setminus k^p$. This is a smooth affine curve, and hence
has a unique regular compactification $X$.  One checks that $X$ is 
the curve $(y^p - yz^{p-1} - t x^p = 0) \subset \bP^2$, which is not 
smooth at its point at infinity.
\end{remark}

\medskip

\noindent
{\bf Acknowledgements}. I warmly thank Giancarlo Lucchini Arteche
for very helpful e-mail discussions, and Zinovy Reichstein for
his interest and for pointing out several references. Many thanks 
also to the referee for a careful reading of this note, and useful 
comments.

\end{document}